\newtheorem{theorem}{Theorem}
\newtheorem{lemma}{Lemma}[section]
\newtheorem{proposition}[lemma]{Proposition}
\theoremstyle{definition}
\newtheorem{remark}[lemma]{Remark}
\newtheorem{notation}[lemma]{Notation}
\newcommand{\Gal}{\mathrm{Gal}}
\newcommand{\Hol}{\mathrm{Hol}}
\newcommand{\Hom}{\mathrm{Hom}}
\newcommand{\Aut}{\mathrm{Aut}}
\newcommand{\Perm}{\mathrm{Perm}}
\newcommand{\ord}{\mathrm{ord}}
\newcommand{\id}{\mathrm{id}}
\newcommand{\Z}{\mathbb{Z}}
\begin{document}
\title[Counting Hopf-Galois Structures]
{Counting Hopf-Galois Structures on Cyclic Field Extensions of
  Squarefree Degree}

\author{Ali A.~Alabdali}
 \thanks{The first-named author acknowledges support from The
    Higher Committee for Education Development in Iraq.}

\address{(A.~Alabdali) Department of Mathematics, College of Engineering,
  Mathematics and Physical Sciences, University of Exeter, Exeter 
EX4 4QF U.K.}

\address{(A.~Alabdali) Department of Mathematics, College of Education for Pure Science,
University of Mosul, Mosul, Iraq.}   
\email{aaab201@exeter.ac.uk}

\author{Nigel P.~Byott}
\address{(N.~Byott) Department of Mathematics, College of Engineering,
  Mathematics and Physical Sciences, University of Exeter, Exeter 
EX4 4QF U.K.}  
\email{N.P.Byott@exeter.ac.uk}

\date{\today}
\subjclass{12F10, 16T05}
\keywords{Hopf-Galois structures; field extensions; groups of squarefree
order}

\bibliographystyle{amsalpha}

\begin{abstract}
We investigate Hopf-Galois structures on a cyclic field extension
$L/K$ of squarefree degree $n$. By a result of Greither and Pareigis,
each such Hopf-Galois structure corresponds to a group of order $n$,
whose isomorphism class we call the type of the Hopf-Galois
structure. We show that every group of order $n$ can occur, and we
determine the number of Hopf-Galois structures of each type. We then
express the total number of Hopf-Galois structures on $L/K$ as a sum
over factorisations of $n$ into three parts. As examples, we give
closed expressions for the number of Hopf-Galois structures on a cyclic
extension whose degree is a product of three distinct primes. (There
are several cases, depending on congruence conditions between the
primes.) We also consider one case where the degree is a product
of four primes.
\end{abstract}

\maketitle

\section{Introduction and Statement of Results}

Let $L/K$ be a finite Galois extension of fields with Galois group
$\Gamma$. Then the group algebra $K[\Gamma]$ is a $K$-Hopf algebra,
and its action on $L$ endows $L/K$ with a Hopf-Galois structure. In
general, this is one among many possible Hopf-Galois structures on
$L/K$. Greither and Pareigis \cite{GP} showed that these Hopf-Galois
structures correspond to certain regular subgroups $G$ in the group
$\Perm(\Gamma)$ of permutations of the underlying set of
$\Gamma$. Finding all Hopf-Galois structures in any particular case
then becomes a combinatorial question in group theory. The groups
$\Gamma$ and $G$ necessarily have the same order, but need not be
isomorphic. We refer to the isomorphism type of $G$ as the type of the
corresponding Hopf-Galois structure.

There is a substantial literature on Hopf-Galois structures on various
classes of field extension. We mention a few results now, and some
others in the final section of this paper.  Let $p$ be an odd prime. A
cyclic field extension of degree $p^m$ admits precisely $p^{m-1}$
Hopf-Galois structures, all of cyclic type \cite{kohl}. An elementary
abelian extension of degree $p^m$ admits many more: there are at least
$p^{m(m-1)-1}(p-1)$ Hopf-Galois structures of elementary abelian type
if $p>m$ \cite{Ch05}, and there are also some of nonabelian type if $m
\geq 3$ \cite{BC}.  For a Galois extension whose Galois group $\Gamma$
is abelian, the type $G$ of any Hopf-Galois structure must be soluble
\cite{HGSsol}, although for a soluble, nonabelian Galois group
$\Gamma$ there can be Hopf-Galois structures whose type is not soluble.
Recently, Crespo, Rio and Vela \cite{CRV} have
investigated those Hopf-Galois structures on an extension $L/K$ which
arise by combining Hopf-Galois structures on $L/F$ and on $F/K$ for
some intermediate field $F$.

In this paper, we investigate Hopf-Galois structures on cyclic
extensions $L/K$ of arbitrary squarefree degree. Thus we consider
cyclic extensions whose degree has a prime factorisation at the other
extreme to those treated in \cite{kohl}. We intend to discuss
Hopf-Galois structures on arbitrary Galois extensions of squarefree
degree in a future paper.

The type of a Hopf-Galois structure on a cyclic extension of
squarefree degree $n$ could potentially be any group $G$ of
order $n$. There may be many of these.  Indeed, H\"older
\cite{holder} showed that the number of isomorphism types of groups of
squarefree order $n$ is given by
\begin{equation} \label{holder-form}
  \sum_{de=n} \prod_{p \mid d} \left(
  \frac{p^{v(p,e)}-1}{p-1} \right), 
\end{equation}
where the sum is over ordered pairs $(d,e)$ of positive integers such
that $de=n$, the product is over primes $p$ dividing $d$, and $v(p,e)$
is the number of distinct prime factors $q$ of $e$ with $q \equiv 1
\pmod{p}$. It is clear that, as $n$ varies over all squarefree
integers, the expression (\ref{holder-form}) can become arbitrarily
large.

It is an immediate consequence of Theorem \ref{HGS-count} below that,
for each group $G$ of order $n$, the number of Hopf-Galois structures
of type $G$ on a cyclic extension of degree $n$ cannot be zero. Thus
all possible types do in fact occur. The cyclic extensions of
squarefree degree therefore form a class for which both the number of
distinct types of Hopf-Galois structures on a given extension, and the
number of distinct prime factors of the degree of a given extension,
may be arbitrarily large. To the best of our knowledge, this is the
first class of extensions with these properties for which it has been
possible to enumerate all Hopf-Galois structures.  For comparison, we
mention that, when the Galois group $\Gamma$ is a nonabelian simple
group, the number of prime factors of $|\Gamma|$ may be arbitrarily
large, but there are only two Hopf-Galois structures, both of type
$\Gamma$ \cite{simple}. On the other hand, for Galois extensions of
degree $p_1 p_2 p_3$, where $p_1$, $p_2$, $p_3$ are distinct odd
primes satisfying certain congruence conditions, Kohl \cite{kohl-ANT}
has determined all Hopf-Galois structures for each possible Galois
group. In this case, the number of distinct types may be arbitrarily
large, but the number of primes dividing the degree is of course fixed
at three.

We will see in Proposition \ref{centre} that each group $G$ of
squarefree order $n$ gives rise to a factorisation $n=dgz$ of $n$, in
which $g$ (respectively, $z$) is the order of the commutator subgroup
$G'$ (respectively, the centre $Z(G)$) of $G$. We can now state the
first of our two main results.

\begin{theorem} \label{HGS-count}
Let $L/K$ be a cyclic extension of fields of squarefree degree $n$,
and let $G$ be any group of order $n$. Let $z=|Z(G)|$, $g=|G'|$ and
$d=n/(gz)$. Then $L/K$ admits precisely $2^{\omega(g)} \varphi(d)$
Hopf-Galois structures of type $G$, where $\varphi$ is Euler's totient
function and $\omega(g)$ is the number of
(distinct) prime factors of $g$.
\end{theorem}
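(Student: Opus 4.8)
The plan is to reduce the count to a combinatorial problem inside the holomorph $\Hol(G) = G \rtimes \Aut(G)$ and then to exploit the direct-product decomposition of $G$ furnished by Proposition \ref{centre}. Write $e(C_n,G)$ for the number of Hopf-Galois structures of type $G$ that we wish to count. By the Greither--Pareigis correspondence \cite{GP}, these are the regular subgroups of $\Perm(\Gamma)$ (with $\Gamma \cong C_n$) that are isomorphic to $G$ and normalised by $\lambda(\Gamma)$; the standard translation of this datum into the holomorph shows that $e(C_n,G)$ equals $\frac{|\Aut(\Gamma)|}{|\Aut(G)|}$ times the number of regular subgroups of $\Hol(G)$ isomorphic to $\Gamma = C_n$. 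Since $\Gamma$ is cyclic, such a regular subgroup is generated by an element of $\Hol(G)$ acting on $G$ as a single $n$-cycle, and each subgroup has exactly $\varphi(n) = |\Aut(C_n)|$ such generators. Writing $F$ for the number of $n$-cycles in $\Hol(G)$, the factors of $\varphi(n)$ cancel and I obtain the clean formula $e(C_n,G) = F / |\Aut(G)|$. The proof then splits into computing $|\Aut(G)|$ and counting $F$.

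Next I would use Proposition \ref{centre} to write $G \cong H \times C_z$, where $z = |Z(G)|$ and $H = C_g \rtimes C_d$ is the reduced factor: $H$ has trivial centre, derived subgroup $H' = C_g \cong G'$, and a faithful action of $C_d$ on $C_g$ with $\gcd(g,d) = \gcd(g, r-1) = 1$, where $r$ generates the image of the action. Because $g$, $d$, $z$ are pairwise coprime, both groups split: $\Hol(G) = \Hol(H) \times \Hol(C_z)$ and $\Aut(G) = \Aut(H) \times \Aut(C_z)$, and an $n$-cycle in $\Hol(G)$ is precisely a pair consisting of a $(gd)$-cycle in $\Hol(H)$ and a $z$-cycle in $\Hol(C_z)$. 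A short calculation with the presentation $H = \langle \sigma, \tau : \sigma^g = \tau^d = 1,\ \tau\sigma\tau^{-1} = \sigma^r \rangle$ shows that every automorphism has the form $\sigma \mapsto \sigma^a$, $\tau \mapsto \sigma^b \tau$ with $a \in (\Z/g)^\times$ and $b \in \Z/g$ (the exponent on $\tau$ being forced to $1$ because $r$ has order $d$ modulo $g$), so $|\Aut(H)| = g\,\varphi(g)$ and hence $|\Aut(G)| = g\,\varphi(g)\,\varphi(z)$. For the abelian factor I would check directly, prime by prime, that the only $z$-cycles in $\Hol(C_z)$ are the $\varphi(z)$ translations by generators of $C_z$, so this factor contributes $\varphi(z)$ to $F$ and a single regular subgroup.

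It remains to count the $(gd)$-cycles in $\Hol(H)$, and this is the crux. Realising $H$ as pairs $(i,j) \in \Z/g \times \Z/d$, an element of $\Hol(H)$ acts by an affine map $(i,j) \mapsto (i_0 + r^{j_0}(ai + b s_j),\ j_0 + j)$, where $s_j = (r^j - 1)/(r-1)$. For this to be a single $(gd)$-cycle, the second coordinate must cycle with full period, forcing $j_0$ to generate $\Z/d$; this supplies the factor $\varphi(d)$. Passing to the $d$-th power, which stabilises each fibre $\Z/g \times \{j\}$, the map restricts on the $p_i$-component $C_{p_i}$ of $C_g = \prod_{p_i \mid g} C_{p_i}$ to an affine map with multiplier $\mu_i = r_i^{j_0} a_i$ (reductions mod $p_i$), and freeness on that component requires $\mu_i^{d} = 1$ together with a nonzero net translation. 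The key claim, and the main obstacle, is that for each prime $p_i \mid g$ exactly two admissible multipliers produce a fixed-point-free $p_i$-cycle: the value $\mu_i = 1$, whose net translation is governed by $i_0$, and one further value, for which the geometric sum $\sum_t \mu_i^{t}$ annihilates the $i_0$-dependence so that the translation is instead governed by $b$. This dichotomy is the source of the $2^{\omega(g)}$, and verifying that exactly two multipliers work at each prime, that the choices at distinct primes of $g$ are independent, and that $i_0$ and $b$ then range over the expected free sets, is the delicate part of the argument.

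Assembling the pieces, I expect to find $F = F_H \cdot \varphi(z)$ with $F_H = 2^{\omega(g)}\,\varphi(d)\,g\,\varphi(g)$, the factor $g\,\varphi(g)$ recording the residual freedom in the translation and automorphism parameters. Dividing by $|\Aut(G)| = g\,\varphi(g)\,\varphi(z)$ then yields $e(C_n, G) = F/|\Aut(G)| = 2^{\omega(g)}\varphi(d)$, as required; in particular this is always nonzero, which recovers the assertion in the introduction that every type occurs.
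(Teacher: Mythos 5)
Your overall route is sound and diverges from the paper's in a worthwhile way. The paper works inside $\Hol(G)$ for $G=G(d,e,k)$ with $e=gz$ directly: it computes $\Aut(G)\cong C_g\rtimes U(e)$ (Lemma \ref{automs}), normalises a generator of a regular cyclic subgroup to the form $[\sigma^a\tau,\theta^c\phi_s]$ (Proposition \ref{b1}), and extracts the transitivity conditions prime by prime via the explicit sums $S$ and $T$ (Proposition \ref{ST-cong}, Lemma \ref{reg-conds}), the primes $q\mid z$ contributing condition (i) there. You instead split off the centre first: since $\gcd(gd,z)=1$, Proposition \ref{centre} does give $G\cong H\times C_z$ with $H=C_g\rtimes C_d$ centreless, whence $\Hol(G)\cong \Hol(H)\times\Hol(C_z)$, $\Aut(G)\cong\Aut(H)\times\Aut(C_z)$ (both factors are characteristic, having coprime orders), and an $n$-cycle is exactly a pair consisting of a $(gd)$-cycle and a $z$-cycle. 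This removes the primes $q\mid z$ from the hard analysis at the outset, a simplification the paper does not make, and your bookkeeping is consistent: $e(C_n,G)=F/|\Aut(G)|$ follows from Lemma \ref{count-HGS} since every generator of a regular cyclic subgroup is an $n$-cycle; $|\Aut(H)|=g\varphi(g)$ agrees with Lemma \ref{automs} specialised to $z=1$; and the abelian factor indeed contributes only the $\varphi(z)$ translations, since an affine map on $\Z_q$ with multiplier $u\not\equiv 1$ has a fixed point.

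However, what you call ``the delicate part'' is precisely the content of the paper's Lemma \ref{reg-conds}, and you assert it rather than prove it; moreover your stated mechanism is slightly misattributed. On the $q$-component ($q\mid g$), with per-step multiplier $\mu=r^{j_0}a$, the net translation of the $d$-th power is, up to a unit, $i_0\sum_{t=0}^{d-1}\mu^{-t}+\frac{r^{j_0+j}b}{r-1}\sum_{t=0}^{d-1}(\mu^{-1}r^{j_0})^t$. When $\mu^d\equiv 1$ but $\mu\not\equiv 1\pmod q$, the first geometric sum kills the $i_0$-contribution for \emph{every} such $\mu$, not just for one further value; what singles out the unique second admissible multiplier is the $b$-coefficient, which vanishes unless $\mu\equiv r^{j_0}\pmod q$ (i.e.\ $a\equiv 1\pmod q$), in which case it is $\equiv dr^{j}/(r-1)\not\equiv 0$, so the translation is nonzero exactly when $q\nmid b$. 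One must also check that the two branches $\mu\equiv 1$ and $\mu\equiv r^{j_0}$ are distinct mod $q$: this holds because $\gcd(j_0,d)=1$ while $\ord_q(r)>1$ divides $d$, so $r^{j_0}\not\equiv 1\pmod q$. These verifications run exactly parallel to Proposition \ref{ST-cong} and the two cases $s\equiv 1$, $s\equiv k^{-1}$ of Lemma \ref{reg-conds}(ii); once supplied, each prime $q\mid g$ contributes $2q(q-1)$ admissible triples $(a,b,i_0)\bmod q$, independence across primes is the Chinese Remainder Theorem, and your totals $F_H=2^{\omega(g)}\varphi(d)\,g\,\varphi(g)$ and $F/|\Aut(G)|=2^{\omega(g)}\varphi(d)$ are correct. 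So the plan is viable and genuinely streamlines the paper's argument, but as written it defers the one computation that constitutes the proof.
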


Our second result gives the total number of Hopf-Galois structures.

\begin{theorem} \label{main}
The number of Hopf-Galois structures on a  cyclic field extension of
squarefree degree $n$ is 
\begin{equation} \label{HGS-formula}
   \sum_{dgz=n} 2^{\omega(g)} \mu(z) \prod_{p \mid d} \left(
  p^{v(p,g)}-1 \right), 
\end{equation}
where the product is over ordered triples $(d,g,z)$ of natural numbers
with $dgz=n$. Here $\mu$ is the M\"obius function.
\end{theorem}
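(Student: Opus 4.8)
The plan is to begin from Theorem \ref{HGS-count}, which already reduces the problem to a sum over isomorphism types: the total number of Hopf-Galois structures on $L/K$ is $\sum_{G} 2^{\omega(g)}\varphi(d)$, where $G$ runs over the isomorphism classes of groups of order $n$ and $g=|G'|$, $z=|Z(G)|$, $d=n/(gz)$ are the invariants of Proposition \ref{centre}. From here the argument is purely combinatorial: I must reorganize this sum over types into the sum over ordered triples $(d,g,z)$ with $dgz=n$ that appears in (\ref{HGS-formula}). The first step is therefore to group the types $G$ according to the triple $(d,g,z)$ they determine. Writing each group of squarefree order $n$ as a metacyclic product $A\rtimes_\psi C$ with $A$ cyclic of order $gz$, $C$ cyclic of order $d$ acting faithfully, $g$ the product of the primes of $A$ moved nontrivially (so $g=|G'|$) and $z$ the product of the primes fixed (so these lie in $Z(G)$), I introduce the quantity $N(d,g)$ counting the isomorphism types in which $C$ acts faithfully and moves exactly the primes dividing $g$. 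By Proposition \ref{centre} this is the number of types realizing a prescribed $(d,g,z)$, and it is independent of $z$. Thus the total count equals $\sum_{dgz=n}\varphi(d)\,2^{\omega(g)}N(d,g)$.

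The second step is to relate $N(d,g)$ to Hölder's enumeration (\ref{holder-form}). Classifying a faithful action of $C_d$ on $C_{gz}$ by the set of primes it actually moves yields the identity $H(d,g)=\sum_{g_1\mid g}N(d,g_1)$, where $H(d,g):=\prod_{p\mid d}\bigl(p^{v(p,g)}-1\bigr)/(p-1)$ is the Hölder factor: a prime dividing $g$ that happens to be fixed simply migrates into the central part and does not affect the isomorphism type. Since $d$ is squarefree we have $\varphi(d)=\prod_{p\mid d}(p-1)$, so $\varphi(d)\,H(d,g)=\prod_{p\mid d}\bigl(p^{v(p,g)}-1\bigr)$, which is exactly the product appearing in (\ref{HGS-formula}).

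The final step is to substitute $H(d,g)=\sum_{g_1\mid g}N(d,g_1)$ into the right-hand side of (\ref{HGS-formula}), write $g=g_1g_2$, and reindex over the fourfold factorisation $d\,g_1\,g_2\,z=n$. Using $2^{\omega(g_1g_2)}=2^{\omega(g_1)}2^{\omega(g_2)}$ and extracting the factor $\varphi(d)\,2^{\omega(g_1)}N(d,g_1)$ that depends only on $d$ and $g_1$, the residual inner sum is $\sum_{g_2 z=w}2^{\omega(g_2)}\mu(z)$, the Dirichlet convolution of $2^{\omega(\cdot)}$ with $\mu$ evaluated at $w=n/(dg_1)$. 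Both functions are multiplicative and the local factor at each prime is $2\cdot 1+1\cdot(-1)=1$, so this inner sum equals $1$ for every squarefree $w$. The right-hand side of (\ref{HGS-formula}) therefore collapses to $\sum_{dg_1\mid n}\varphi(d)\,2^{\omega(g_1)}N(d,g_1)$, which coincides with the expression coming from Theorem \ref{HGS-count}; renaming $g_1$ as $g$ completes the proof.

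The hard part will be the structural input of the second step, namely justifying the metacyclic normal form for groups of squarefree order and verifying that the number of types realizing $(d,g,z)$ is governed by Hölder's count through the identity $H(d,g)=\sum_{g_1\mid g}N(d,g_1)$; one must check carefully that faithfulness of $C$ forces each prime of $d$ to move some prime of $g_1$ and that the fixed primes contribute only to the centre. Once this dictionary between $(d,g,z)$ and the action data is in place, the concluding step is a routine multiplicative computation, the entire cancellation being encoded in the vanishing identity for the convolution of $2^{\omega(\cdot)}$ and $\mu$, which is precisely what produces the M\"obius factor $\mu(z)$ in (\ref{HGS-formula}).
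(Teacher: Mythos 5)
Your proposal is correct, and it rests on the same two pillars as the paper's own proof: the per-type count $2^{\omega(g)}\varphi(d)$ from Theorem \ref{HGS-count}, and the dictionary (Lemma \ref{sf-class} plus Proposition \ref{centre}) between isomorphism types with invariants $(d,g,z)$ and order-$d$ subgroups of $U(gz)$ whose support is exactly the set of primes dividing $g$ --- which is what makes your $N(d,g)$ well defined and independent of $z$, and what yields your identity $H(d,g)=\sum_{g_1\mid g}N(d,g_1)$. The paper establishes precisely these facts inside the proof of Proposition \ref{isom-g}: its $m(f)$ and $m^*(f)$ are your $N(d,f)$ and $H(d,f)$, and the relation $m^*(g)=\sum_{f\mid g}m(f)$ is your classification of an order-$d$ subgroup by its support. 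Where you genuinely diverge is in the divisor-sum bookkeeping. The paper first M\"obius-inverts to obtain a closed formula for the number of types per factorisation (Proposition \ref{isom-g}), then sums it against $2^{\omega(g)}\varphi(d)$ and collapses the result using the identity $\sum_{t\mid m}(-2)^{\omega(t)}=\mu(m)$ for squarefree $m$. You never invert: you verify (\ref{HGS-formula}) in the reverse direction, expanding its right-hand side via $H=\sum N$ and annihilating the residual sum with the dual identity $\sum_{g_2z=w}2^{\omega(g_2)}\mu(z)=1$ for squarefree $w$. The two convolution identities are M\"obius-equivalent and both reduce to a local computation at a single prime ($2-1=1$, respectively $1-2=-1$), so the cancellation mechanism is identical; your arrangement is arguably the more economical if Theorem \ref{main} is the sole goal, since the paper's inversion and its subsequent re-summation partially undo each other, whereas the paper's arrangement buys Proposition \ref{isom-g} as a free-standing count of isomorphism types per factorisation, which it reuses in the examples section (Tables \ref{3-prime-table} and \ref{4-prime}). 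The step you flagged as the hard part is unproblematic and fully available from the paper's earlier results: Proposition \ref{centre} gives $z=\gcd(e,k-1)$ and $g=e/z$, so the fixed primes do migrate into the centre exactly as you claim, and since any subgroup of squarefree order of the abelian group $U(e)$ is cyclic (its Sylow subgroups have prime order), order-$d$ subgroups are classified by their supports just as your identity requires.
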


We remark that (\ref{HGS-formula}) has a similar shape
to H\"{o}lder's formula (\ref{holder-form}), although with a sum over
factorisations into three parts rather than two. In both cases, the
term for each factorisation involves a product over primes $p$
dividing $d$, in which the contribution corresponding to $p$
does not depend on $d$ and $p$ alone. (In (\ref{holder-form}) it
depends on $e$, and in (\ref{HGS-formula}) on $g$.)

\section{Preliminaries on Hopf-Galois structures}

Let $L/K$ be a field extension of finite degree, and let $H$ be a
cocommutative $K$-Hopf algebra acting on $L$ . We write $\Delta \colon
H \to H \otimes_K H$ and $\epsilon \colon H \to K$ for the
comultiplication and counit maps on $H$, and use Sweedler's notation
$\Delta(h)=\sum_{(h)} h_{(1)} \otimes h_{(2)}$. We will say that the
action of $H$ on $L$ makes $L$ into an $H$-module algebra if $h \cdot
(xy) = \sum_{(h)} (h_{(1)} \cdot 
x) \otimes (h_{(2)}\cdot y)$ and $h \cdot k = \epsilon(h) k$ for all
$h \in H$, all $x$, $y \in L$ and all $k \in K$. A Hopf-Galois
structure on $L$ consists of a Hopf algebra $H$ acting on $L$ so that $L$ is an
$H$-module algebra and the $K$-linear map  
$\theta \colon L \otimes_K L \to \Hom_K(H,L)$ is bijective, where
$\theta(x\otimes y)(h)=x (h \cdot y)$ for $x$, $y \in L$
and $h \in H$. 

When $L/K$ is separable, Greither and Pareigis \cite{GP} used descent
theory to show how all Hopf-Galois structures on $L/K$ could be
described in group-theoretic terms. We consider here only the special case
where $L/K$ is a Galois extension in the classical sense (that is,
$L/K$ is normal as well as separable). Let $\Gamma=\Gal(L/K)$ be the
Galois group of $L/K$. Then the Hopf-Galois structures on $L/K$
correspond bijectively to subgroups $G$ of $\Perm(\Gamma)$ which are
regular on $\Gamma$ and are normalised by the group $\lambda(\Gamma)$
of left translations by elements of $\Gamma$. Recall that a group $G$
acting on a set $X$ is regular if the action is transitive on $X$ and the
stabiliser of any point is trivial.

The direct determination of all regular subgroups in $\Perm(\Gamma)$
normalised by $\lambda(\Gamma)$ is often difficult as the group
$\Perm(\Gamma)$ is large. However, the condition that
$\lambda(\Gamma)$ normalises $G$ means that $\Gamma$ is contained in
the holomorph $\Hol(G)=G \rtimes \Aut(G)$ of $G$, where the latter
group is viewed as a subgroup of $\Perm(\Gamma)$ and is usually
much smaller than $\Perm(\Gamma)$. We may then view $\Gamma$ as acting
on $G$, and this action is again regular. If the isomorphism types of
groups $G^*$ of order $|\Gamma|$ admit a manageable classification,
the Hopf-Galois structures on $L/K$ can be determined by
considering each $G^*$ in turn and finding the regular subgroups
$\Gamma^*$ of $\Hol(G^*) $ which are isomorphic to $\Gamma$. This
leads to the following result, cf.~ \cite[Cor.~to Prop.~1]{unique} or
\cite[\S7]{Ch00}:
\begin{lemma} \label{count-HGS}
Let $L/K$ be a finite Galois extension of fields with Galois group
$\Gamma$, and, for any group $G$ with $|G|=|\Gamma|$, let $e'(G,\Gamma)$
be the number of regular subgroups of $\Hol(G)$ isomorphic to
$\Gamma$. Then the number $e(G,\Gamma)$ of Hopf-Galois structures
on $L/K$ of type $G$ is given by
$$   e(G,\Gamma) = \frac{|\Aut(\Gamma)|}{|\Aut(G)|} \;  e'(G,\Gamma). $$
Moreover, the total number of Hopf-Galois structures on $L/K$ is given
by $\sum_{G} e(G,\Gamma)$, where the sum is over all
isomorphism types $G$ of groups of order $|\Gamma|$. 
\end{lemma}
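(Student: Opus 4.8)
The plan is to prove the displayed formula for $e(G,\Gamma)$ by a double count, and then read off the final assertion directly. The ``moreover'' statement is immediate from the Greither--Pareigis correspondence recalled above: every Hopf-Galois structure on $L/K$ corresponds to a regular subgroup of $\Perm(\Gamma)$ normalised by $\lambda(\Gamma)$, each such subgroup has a well-defined isomorphism type, and so the set of all Hopf-Galois structures is partitioned according to type. Summing $e(G,\Gamma)$ over the finitely many isomorphism types $G$ of order $|\Gamma|$ therefore recovers the total. The substance of the lemma is thus the formula for $e(G,\Gamma)$, which I would establish by the standard translation of \cite{unique,Ch00}.

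To carry this out, I would fix an abstract group $G$ with $|G|=|\Gamma|$ and introduce the set $E$ of injective homomorphisms $\beta \colon \Gamma \to \Hol(G)$ whose image is regular on the set $G$, then compute $|E|$ in two ways. The first count is straightforward: each regular subgroup of $\Hol(G)$ isomorphic to $\Gamma$ is the image of exactly $|\Aut(\Gamma)|$ such embeddings, since two embeddings with the same image differ by an automorphism of $\Gamma$. Hence $|E| = |\Aut(\Gamma)| \, e'(G,\Gamma)$.

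The second count relates $E$ to the Hopf-Galois structures of type $G$. Given $\beta \in E$, the regular action of $\beta(\Gamma)$ on $G$ together with the base point $1 \in G$ yields a bijection $b_\beta \colon \Gamma \to G$, $\gamma \mapsto \beta(\gamma)(1)$, and conjugation by $b_\beta$ gives an isomorphism $\Perm(G) \to \Perm(\Gamma)$ carrying $\beta(\Gamma)$ onto $\lambda(\Gamma)$. Since $\beta(\Gamma) \subseteq \Hol(G) = \mathrm{Norm}_{\Perm(G)}(\lambda(G))$, where $\lambda(G)$ denotes the image of the left regular representation of $G$, this isomorphism sends $\lambda(G)$ to a regular subgroup $N_\beta$ of $\Perm(\Gamma)$ that is isomorphic to $G$ and normalised by $\lambda(\Gamma)$ --- that is, to a Hopf-Galois structure of type $G$. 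The group $\Aut(G)$ acts on $E$ by conjugation within $\Hol(G)$, and I would check that $\beta \mapsto N_\beta$ is constant on $\Aut(G)$-orbits, that this action is free, and that the induced map from orbits to Hopf-Galois structures of type $G$ is a bijection. This gives $|E| = |\Aut(G)| \, e(G,\Gamma)$, and comparing the two counts yields the asserted formula.

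The main obstacle is the analysis of $\beta \mapsto N_\beta$ in the second count. Its well-definedness on orbits and the reverse construction establishing surjectivity both rest on the fact that the normaliser in $\Perm(X)$ of a regular subgroup is its holomorph, applied once with $X = G$ and once with $X = \Gamma$; keeping the two base-point identifications consistent is the delicate bookkeeping. Freeness of the $\Aut(G)$-action is the one genuinely group-theoretic input: if $\psi \in \Aut(G)$ fixes $\beta$, then $\psi$ centralises the transitive group $\beta(\Gamma)$ in $\Perm(G)$ while fixing the point $1 \in G$, and since the centraliser of a transitive subgroup acts freely, this forces $\psi = \id$.
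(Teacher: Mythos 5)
Your proposal is correct and takes essentially the same approach as the paper's source: the paper does not prove Lemma \ref{count-HGS} itself but cites \cite[Cor.~to Prop.~1]{unique} and \cite[\S 7]{Ch00}, whose argument is precisely your double count of the set of regular embeddings $\beta \colon \Gamma \to \Hol(G)$, once via the $|\Aut(\Gamma)|$-to-one passage to subgroups and once via the free conjugation action of $\Aut(G)$, using $\Hol(G)=\mathrm{Norm}_{\Perm(G)}(\lambda(G))$ to transport $\lambda(G)$ to a regular subgroup of $\Perm(\Gamma)$ normalised by $\lambda(\Gamma)$. The delicate points you flag --- well-definedness on orbits, surjectivity via the same normaliser fact with base points kept consistent, and freeness because the centraliser of a transitive subgroup acts freely --- are exactly the steps of the standard proof, and your treatment of them is sound.
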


\section{Preliminaries on groups of squarefree order}

We will call a finite group a $C$-group if all its Sylow subgroups are
cyclic. In particular, any group  of squarefree order is a
$C$-group. All $C$-groups are metabelian, so $C$-groups can in principle be
classified \cite[10.1.10]{Robinson}. This classification is given in a
rather explicit form in a paper of Murty and Murty \cite{MM}, who
investigated the asymptotic behaviour of the number of $C$-groups of
order up to a given bound. We state their classification result, in
the special case of groups of squarefree order, as Lemma
\ref{sf-class} below. 

\begin{notation}
For an integer $N \geq 1$, we denote by $\Z_N$ the ring $\Z/N\Z$ of
integers modulo $N$, and by $U(N)$ the group of units in $\Z_N$. We write
$\ord_N(a)$ for the order of an element $a \in U(N)$. Abusing
notation, we will often use the same symbol for an element of $\Z$ and
its class in $\Z_N$. We write $1_G$ for the identity element of a
group $G$. 
\end{notation}

\begin{lemma} \label{sf-class}
Let $n$ be squarefree. Then any group of order $n$ has the form
$$   G(d,e,k)= \langle \sigma, \tau \colon \sigma^e=\tau^d=1_G, \tau
\sigma \tau^{-1} = \sigma^k \rangle $$ 
where $n=de$, $\gcd(d,e)=1$ and $\ord_e(k)=d$. Conversely, any choice
of $d$, $e$ and $k$ satisfying these conditions gives a group
$G(d,e,k)$ of order $n$. Moreover, two such groups $G(d,e,k)$ and
$G(d',e',k')$ are isomorphic if and only if $d=d'$, $e=e'$, and
$k$, $k'$  generate the same cyclic subgroup of $U(e)$.
\end{lemma}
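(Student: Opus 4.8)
The plan is to establish the three assertions in turn: the existence of the presentation, the converse, and the isomorphism criterion. For existence I would start from the fact quoted above that a group $G$ of squarefree order is a $C$-group and hence metabelian. Then $G'$ is abelian of squarefree order, so it is cyclic, say of order $g$, and $G/G'$ is abelian of squarefree order, hence cyclic, of order $n/g$. The first key step is to prove $\gcd(g,n/g)=1$: if a prime $p$ divided both then, $G$ being squarefree, the Sylow $p$-subgroup $P$ has order $p$ and coincides with the Sylow $p$-subgroup of the normal cyclic group $G'$, so $P\subseteq G'$, whence $P$ has trivial image in $G/G'$, contradicting $p\mid n/g$. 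Thus $G'$ is a normal Hall subgroup, and Schur--Zassenhaus produces a cyclic complement $C\cong G/G'$. Conjugation makes $C$ act on $G'$ through a homomorphism $C\to\Aut(G')=U(g)$; writing $C_0$ for its kernel, $z=|C_0|$, and $d=|C|/z=n/(gz)$ for the order of the image, I would set $\langle\sigma\rangle=G'C_0$ and take $\langle\tau\rangle$ to be the order-$d$ subgroup of $C$. Since $\gcd(g,n/g)=1$, the subgroup $G'C_0$ is cyclic of order $e:=gz$ with $\gcd(d,e)=1$; it is normal, $C_0$ is central (it centralises both $G'$ and the abelian group $C$), and $\tau$ acts on $\langle\sigma\rangle$ as multiplication by some $k\in U(e)$ with $\ord_e(k)=d$. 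This yields $G=G(d,e,k)$.

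For the converse I would note that the presentation describes exactly the semidirect product $\Z_e\rtimes\Z_d$ in which a generator of $\Z_d$ acts on $\Z_e$ as multiplication by $k$; the action is well defined because $k\in U(e)$ and $k^d\equiv 1\pmod e$. This semidirect product has order $de=n$ with $\sigma$ of order exactly $e$, while the abstract group presented by the relations surjects onto it and has order at most $de$, so the two coincide. This step is routine.

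The isomorphism criterion is where the substance lies. I would first show that $d$ and $e$ are isomorphism invariants by identifying $\langle\sigma\rangle$ intrinsically: in $G(d,e,k)$ the commutator subgroup is $G'=\langle\sigma^{k-1}\rangle$, of order $g$ equal to the product of the primes $p\mid e$ with $k\not\equiv 1\pmod p$, while the centre $Z(G)$ is the complementary part of $\langle\sigma\rangle$ (of order $z$; a direct computation shows no power of $\tau$ other than $1_G$ is central once $d>1$). Hence $\langle\sigma\rangle=G'Z(G)$ is a product of characteristic subgroups, so is itself characteristic, of order $gz=e$, and therefore $e$ and $d=n/e$ depend only on the isomorphism type. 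Given this, any isomorphism $G(d,e,k)\to G(d,e,k')$ carries $\langle\sigma\rangle$ to $\langle\sigma'\rangle$, inducing an element of $\Aut(\Z_e)=U(e)$, and carries a complement to a complement; the decisive point is that $U(e)$ is abelian, so conjugating the action by an element of $U(e)$ leaves $k$ unchanged, and the only remaining freedom is to replace $\tau$ by a generator $\tau^j$ of $\langle\tau\rangle$ with $\gcd(j,d)=1$, which replaces $k$ by $k^j$. Thus isomorphic presentations are precisely those with the same $d$, $e$ and with $k$, $k'$ generating the same cyclic subgroup of $U(e)$, and conversely such a change of generator defines an explicit isomorphism. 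I expect the main obstacle to be the structural existence argument — in particular the coprimality $\gcd(g,n/g)=1$ and the clean splitting of $\langle\sigma\rangle$ into its commutator and central parts — after which the abelianness of $U(e)$ makes the classification fall out.
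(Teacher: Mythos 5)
Your proof is correct, but it takes a genuinely different route from the paper, which does not prove Lemma \ref{sf-class} at all: it simply cites Lemmas 3.5 and 3.6 of Murty and Murty \cite{MM}. You instead reconstruct the classification from scratch, essentially the classical H\"older argument specialised to squarefree order: $G$ is metabelian since all its Sylow subgroups are cyclic, $G'$ and $G/G'$ are cyclic (abelian of squarefree order), Schur--Zassenhaus gives a cyclic complement $C$, and taking $C_0$ to be the kernel of the conjugation map $C \to \Aut(G') = U(g)$ you form $\langle\sigma\rangle := G'C_0$, which is indeed cyclic of order $e = gz$ because $C_0$ centralises $G'$ by definition and centralises the abelian $C$; the order claim $\ord_e(k)=d$ follows since $k \equiv 1 \pmod{z}$ while $\langle\tau\rangle \cap C_0 = 1$ forces $\ord_g(k)=d$. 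Three small remarks. First, your Sylow argument for $\gcd(g,n/g)=1$ is unnecessarily elaborate: $n = g\cdot(n/g)$ is squarefree, so the factors are automatically coprime. Second, in the invariance step you need slightly more than ``no power of $\tau$ other than $1_G$ is central'': you need that no element $\sigma^a\tau^b$ with $d \nmid b$ is central, but the direct computation $\sigma^{-1}(\sigma^a\tau^b)\sigma = \sigma^{a-1+k^b}\tau^b$ (commuting with $\sigma$ forces $k^b \equiv 1 \pmod{e}$, i.e.\ $d \mid b$) supplies this; it is exactly the computation in the paper's Proposition \ref{centre}, which your argument in effect re-derives, since identifying $G' = \langle\sigma^z\rangle$ and $Z(G) = \langle\sigma^g\rangle$ makes $\langle\sigma\rangle = G'Z(G)$ characteristic and so $e$ and $d=n/e$ isomorphism invariants. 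Third, in the ``only if'' direction an isomorphism need not carry $\langle\tau\rangle$ to $\langle\tau'\rangle$; it sends $\tau \mapsto \sigma'^{m}\tau'^{j}$ with $\gcd(j,d)=1$, but since the conjugation action on the abelian $\langle\sigma'\rangle$ depends only on $j$, one still gets $k \equiv k'^{j} \pmod{e}$, so $\langle k\rangle = \langle k'\rangle$ in $U(e)$ as you conclude, and the converse map $\sigma \mapsto \sigma'$, $\tau \mapsto \tau'^{j}$ is checked on the relations. What your approach buys is a self-contained proof where the paper has only a citation; what the citation buys is the full classification of $C$-groups, of which the squarefree case treated here is the special (and simpler, since no relation $\tau^d = \sigma^t$ with $t \neq 0$ can occur) instance.
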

\begin{proof}
This follows from \cite[Lemmas 3.5 \& 3.6]{MM}.
\end{proof}

\begin{remark} \label{d-phi-e}
The existence of $k$ with $\ord_e(k) = d$ implies that $d$ divides
$\varphi(e)=|U(e)|$. Thus there may be many factorisations
$n=de$ of $n$ for which no groups $G(d,e,k)$ occur. 
\end{remark}

\begin{remark} \label{holder-rmk}
We note in passing how H\"older's formula (\ref{holder-form}) follows
from Lemma \ref{sf-class}. For fixed $d$ and $e$, the number of
isomorphism types of group $G(d,e,k)$ is the number of (necessarily cyclic)
subgroups of order $d$ in $U(e)$. Each such group is the
product of its Sylow $p$-subgroups for the primes $p$ dividing $d$.
For each such $p$, the $p$-rank of $U(e)$ is $v(p,e)$, so
$U(p)$ contains $(p^{v(p,e)}-1)/(p-1)$ subgroups of order
$p$. Taking the product over $p$ gives the number of subgroups of
order $d$.  Summing over $d$ yields the formula (\ref{holder-form})
for the number of isomorphism types of groups of order $n$.
\end{remark}

\begin{proposition} \label{centre}
Let $G=G(d,e,k)$ be a group of squarefree order $n$ as in Lemma
\ref{sf-class}. Let $z= \gcd(e,k-1)$ and $g=e/z$, so that we have 
factorisations $e=gz$ and $n=de=dgz$. Then the centre $Z(G)$ of $G$ is
the cyclic group $\langle \sigma^g \rangle$ of order $z$, and the
commutator subgroup $G'$ of $G$ is the the cyclic group $\langle
\sigma^z \rangle$ of order $g$. 
\end{proposition}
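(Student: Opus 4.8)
The plan is to reduce everything to congruence arithmetic inside the normal cyclic subgroup $\langle\sigma\rangle$. Since $\langle\sigma\rangle \cong \Z_e$ is normal in $G$ with $G/\langle\sigma\rangle \cong \langle\tau\rangle \cong \Z_d$, every element has a unique normal form $\sigma^a\tau^b$ with $0 \le a < e$ and $0 \le b < d$. The first step is to record the commutation rule: from $\tau\sigma\tau^{-1}=\sigma^k$ one gets $\tau\sigma^a = \sigma^{ak}\tau$, and then by induction on $b$ the rule $\tau^b\sigma^a = \sigma^{ak^b}\tau^b$. Together with the normal form, this turns all subsequent computations into statements about divisibility modulo $e$.

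For the commutator subgroup, I would first note that $G/\langle\sigma\rangle$ is cyclic, hence abelian, so $G' \subseteq \langle\sigma\rangle$. The single commutator $[\tau,\sigma]=\tau\sigma\tau^{-1}\sigma^{-1}=\sigma^{k-1}$ lies in $G'$, so $\langle\sigma^{k-1}\rangle \subseteq G'$; conversely, modulo $\langle\sigma^{k-1}\rangle$ we have $\sigma^k \equiv \sigma$, so $\bar\sigma$ and $\bar\tau$ commute and $G/\langle\sigma^{k-1}\rangle$ is abelian, giving $G' \subseteq \langle\sigma^{k-1}\rangle$. Hence $G'=\langle\sigma^{k-1}\rangle$. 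Finally, inside the cyclic group $\langle\sigma\rangle \cong \Z_e$ the element $\sigma^{k-1}$ generates the same subgroup as $\sigma^{\gcd(e,k-1)}=\sigma^z$, which has order $e/z = g$. This identifies $G'=\langle\sigma^z\rangle$, cyclic of order $g$.

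For the centre, I would test when $\sigma^a\tau^b$ commutes with each of the two generators. Commuting with $\sigma$ gives $\sigma^{a+1}\tau^b = \sigma^{a+k^b}\tau^b$ via the commutation rule, forcing $k^b \equiv 1 \pmod e$; since $\ord_e(k)=d$, this means $d \mid b$, i.e.\ $b\equiv 0$, so the element is a power $\sigma^a$. Commuting $\sigma^a$ with $\tau$ then requires $\sigma^{a(k-1)}=1$, i.e.\ $e \mid a(k-1)$. Writing $k-1 = zm$ with $\gcd(g,m)=1$ (which follows by dividing $z=\gcd(e,k-1)$ and $e=gz$ through by $z$), the condition $gz \mid azm$ reduces to $g \mid am$, hence $g \mid a$. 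Thus the central elements are exactly the powers of $\sigma^g$, so $Z(G)=\langle\sigma^g\rangle$, cyclic of order $e/g = z$.

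I do not expect a serious obstacle: once the commutation rule $\tau^b\sigma^a=\sigma^{ak^b}\tau^b$ is established, both parts are pure modular arithmetic in $\Z_e$. The one point needing care is the coprimality $\gcd(g,m)=1$ in the centre computation, since this is precisely what upgrades $g\mid am$ to $g\mid a$ and thereby pins the order of the centre down to exactly $z$ rather than a proper multiple; it is an immediate consequence of the definition $z=\gcd(e,k-1)$.
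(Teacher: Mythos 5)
Your proof is correct and follows essentially the same route as the paper: reduce to the normal form $\sigma^a\tau^b$ via the commutation rule, identify $Z(G)$ by testing commutation with the generators $\sigma$ and $\tau$, and obtain $G'=\langle\sigma^{k-1}\rangle=\langle\sigma^z\rangle$ from the single commutator together with the abelianness of $G/\langle\sigma^{k-1}\rangle$. The only difference is that you spell out the step $e \mid a(k-1) \iff g \mid a$ via $\gcd(g,(k-1)/z)=1$, which the paper simply asserts.
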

\begin{proof}
For $\gamma=\sigma^a \tau^b \in G$, we have $\sigma^{-1}
\gamma \sigma=\sigma^{a-1+k^b}\tau^b$. Since $\ord_e(k) =d$,
it follows that $\gamma$ commutes with $\sigma$ if and only if $d \mid
b$. But then $\gamma=\sigma^a$ and $\tau \gamma \tau^{-1}=\tau
\sigma^a \tau^{-1} = \sigma^{ak}$. Thus $\tau \gamma \tau^{-1}=\gamma$
precisely when $e \mid a(k-1)$, that is, when $g \mid a$. Hence
$Z(G)=\langle \sigma^g \rangle$. 

Turning to $G'$, we have $\tau \sigma \tau^{-1}
\sigma^{-1}=\sigma^{k-1}$. Thus $G'$ contains the normal subgroup
$\langle \sigma^{k-1} \rangle = \langle \sigma^z \rangle$ of
$G$. Equality holds since $G/\langle \sigma^{k-1} \rangle$ is
abelian.  
\end{proof}
 
We next find the number of isomorphism classes of groups $G$
corresponding to the factorisation $n=dgz$. 

\begin{proposition} \label{isom-g}
Let $n=dgz$ be squarefree. Then the number of isomorphism types of
groups $G$ of order $n$ with $|Z(G)|=z$ and $|G'|=g$ is 
\begin{equation} \label{isom-g-count}
  \varphi(d)^{-1} \sum_{f \mid g} \mu\left( \frac{g}{f} \right) 
         \prod_{p \mid d} (p^{v(p,f)}-1).  
\end{equation}
\end{proposition}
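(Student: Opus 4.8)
The plan is to combine the classification in Lemma~\ref{sf-class} with the description of the centre and commutator subgroup in Proposition~\ref{centre}, and then to carry out a M\"obius inversion over the divisors of $g$. First I would fix $n=dgz$ and set $e=gz$, so that the primes dividing $d$, $g$ and $z$ are pairwise distinct. By Proposition~\ref{centre} a group $G(d',e',k')$ of order $n$ has $|Z(G)|=\gcd(e',k'-1)$ and $|G'|=e'/\gcd(e',k'-1)$; hence the conditions $|Z(G)|=z$ and $|G'|=g$ force $e'=gz=e$ and $d'=n/e=d$, and are together equivalent to the single requirement $\gcd(e,k'-1)=z$. By Lemma~\ref{sf-class} the isomorphism types to be counted are thus in bijection with the cyclic subgroups $H=\langle k\rangle$ of order $d$ in $U(e)$ satisfying $\gcd(e,k-1)=z$.

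Next I would rephrase this in terms of the local factors. As $e$ is squarefree, $U(e)=\prod_{p\mid e}U(p)$, and $p\mid\gcd(e,k-1)$ precisely when the image of $k$ in $U(p)$ is trivial. Since $H$ is cyclic, its image in $U(p)$ is trivial for one generator iff it is trivial for all of $H$; so $\gcd(e,k-1)$ depends only on $H$ and equals the product of those primes $p\mid e$ for which $H$ has trivial image in $U(p)$. Writing $U(e)=U(g)\times U(z)$, the requirement $\gcd(e,k-1)=z$ becomes: $H$ has trivial image in $U(p)$ for every $p\mid z$ (equivalently $H\le U(g)$) and nontrivial image in $U(p)$ for every $p\mid g$. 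The count is therefore the number of cyclic subgroups $H$ of order $d$ in $U(g)$ whose image in each factor $U(p)$, $p\mid g$, is nontrivial.

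Finally I would remove the nontriviality constraints by inclusion--exclusion over the primes dividing $g$. For $f\mid g$, write $c(d,f)$ for the number of cyclic subgroups of order $d$ in $U(f)$; imposing trivial image in the factors $U(p)$ for all primes $p$ dividing $g/f$ is the same as requiring $H\le U(f)$. As $S$ runs over the subsets of the primes dividing $g$, the divisor $f=g/\prod_{p\in S}p$ runs over all divisors of $g$, with sign $(-1)^{|S|}=\mu(g/f)$, so the count equals $\sum_{f\mid g}\mu(g/f)\,c(d,f)$. To evaluate $c(d,f)$ I would appeal to the Sylow computation already made in Remark~\ref{holder-rmk}: since $d$ is squarefree every subgroup of order $d$ is cyclic, and choosing a subgroup of order $p$ from the $p$-torsion of $U(f)$ for each $p\mid d$ gives $c(d,f)=\prod_{p\mid d}(p^{v(p,f)}-1)/(p-1)$. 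Substituting this and extracting the factor $\prod_{p\mid d}(p-1)^{-1}=\varphi(d)^{-1}$ (valid because $d$ is squarefree) yields exactly~(\ref{isom-g-count}).

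I expect the main obstacle to be the middle step: verifying that $\gcd(e,k-1)=z$ is genuinely a condition on the subgroup $H$ rather than on a chosen generator, and that it separates cleanly into trivial image for the primes dividing $z$ and nontrivial image for those dividing $g$. Once the problem is cast as counting cyclic subgroups of order $d$ in $U(g)$ with nontrivial image in every local factor, the inclusion--exclusion and the closing arithmetic are routine.
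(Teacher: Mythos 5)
Your proof is correct and follows essentially the same route as the paper: both translate the conditions via Lemma~\ref{sf-class} and Proposition~\ref{centre} into counting subgroups of order $d$ in $U(e)$ with prescribed local projections, evaluate the number of order-$d$ subgroups of $U(f)$ as $\prod_{p\mid d}(p^{v(p,f)}-1)/(p-1)$ exactly as in Remark~\ref{holder-rmk}, and then invert --- the paper by M\"obius inversion of the count $m^*(f)$ of groups with $|G'|$ dividing $f$, you by inclusion--exclusion over the primes dividing $g$, which for squarefree $g$ is the identical computation. Your explicit verification that $\gcd(e,k-1)$ depends only on the subgroup $\langle k\rangle$ rather than on the chosen generator is a detail the paper leaves implicit, but it is not a different argument.
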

\begin{proof}
We keep $d$ and $e=n/d$ fixed. For each factor $g$ of $e$ let $m(g)$
be the number of isomorphism types of groups $G=G(d,e,k)$ (with $k$
varying) for which $|G'|=g$. We need to show that $m(g)$ is given by
the formula (\ref{isom-g-count}).

Let $m^*(g)$ be the number of groups
$G(d,e,k)$ for which $|G'|$ {\em divides} $g$. Then 
$$ m^*(g) = \sum_{f \mid g} m(f), $$
and so, by M\"obius inversion,
\begin{equation} \label{mobius}
     m(g) = \sum_{f \mid g} m^*(f) \mu \left( \frac{g}{f} \right). 
\end{equation}
The distinct isomorphism types of groups $G$ correspond to distinct
subgroups $D$ of order $d$ in $U(e) \cong \prod_{q \mid e}
U(q)$, where the product is over primes $q$ dividing $e$. Let
$f \mid e$. Then $|G'|$ divides $f$ precisely when $e/f$ divides
$|Z(G)|$, which occurs when $D$ has trivial projection in the factor
$U(q)$ for each prime $q$ dividing $e/f$. Hence $m^*(f)$ is the
number of subgroups of order $d$ in $U(f)$, and, arguing as in
Remark \ref{holder-rmk}, this is $\prod_{p \mid d}
(p^{v(p,f)}-1)/(p-1)$. Substituting into (\ref{mobius}) and noting
that $\prod_{p \mid d} (p-1) = \varphi(d)$, we obtain the expression
(\ref{isom-g-count}) for $m(g)$.
\end{proof} 

\section{Automorphisms and the Holomorph}

For this section and the next, we fix a group $G=G(d,e,k)$ of
squarefree order $n$. We keep the preceding notation, so $g=|G'|$,
$z=|Z(G)|$, and $n=de=dgz$. Our goal is to find the number of cyclic
subgroups of $\Hol(G)$ which are regular on $G$. By Lemma
\ref{count-HGS}, this will enable us to find the number of Hopf-Galois
structures of type $G$ on a cyclic extension of degree $n$. In this
section, we will describe $\Aut(G)$ and $\Hol(G)$. In \S5, we
determine all regular cyclic subgroups in $\Hol(G)$ and complete the
proof of Theorem \ref{HGS-count}. In \S\ref{proof-main}, we sum over
the different isomorphism types $G$ to prove Theorem \ref{main}.

Until the end of \S\ref{proof-main}, we shall systematically use the
notation $p$ for prime factors of $d$ and $q$ for prime factors of
$e$. Thus the primes $q$ are of two types: either $q \mid g$ or $q
\mid z$.

We begin by recording a formula which allows us to perform calculations in
$G$ itself. For integers $h$ and $j \geq 0$, we define
\begin{equation}  \label{def-S}
     S(h,j)= \sum_{i=0}^{j-1} h^i. 
\end{equation}
In particular, $S(h,0)=0$. A simple induction shows that, for any
$a \in \Z$,  
\begin{equation} \label{pow-sig-a-t}
  (\sigma^a \tau)^j = \sigma^{aS(k,j)} \tau^j.
\end{equation}

The next result describes the automorphisms of $G$.

\begin{lemma} \label{automs}
We have $|\Aut(G)|=g \varphi(e)$ and 
$$    \Aut(G) \cong C_g \rtimes U(e), $$
where $a \in U(e)$ acts on $C_g$ by $x \mapsto x^a$. (Note that in
general this action is not faithful.) 

Explicitly, $\Aut(G)$ is generated by the automorphism $\theta$ and
automorphisms $\phi_s$ for each $s \in U(e)$, where
\begin{equation} \label{def-theta}
   \theta(\sigma)=\sigma, \qquad \theta(\tau)=\sigma^z \tau, 
   \end{equation}
and
\begin{equation} \label{def-phi-s}
   \phi_s(\sigma)=\sigma^s, \qquad \phi_s(\tau)=\tau. 
\end{equation}
These automorphisms satisfy the relations
\begin{equation} \label{autom-rel} 
  \theta^g= \id_G, \qquad \phi_s \phi_t = \phi_{st}, \qquad \phi_s
  \theta \phi_s^{-1} = \theta^s.
\end{equation}
\end{lemma}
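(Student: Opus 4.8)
The plan is to pin down the possible images of the generators $\sigma$ and $\tau$ under an arbitrary $\alpha \in \Aut(G)$, show that the resulting candidate maps are exactly the claimed ones, and then read off the group structure and the relations.

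First I would argue that $\langle \sigma \rangle$ is characteristic in $G$. Since $\gcd(d,e)=1$ and $\langle \sigma \rangle$ is normal of order $e$, it is a normal Hall subgroup, hence consists precisely of the elements whose order divides $e$ (any element outside $\langle \sigma \rangle$ has nontrivial image in $G/\langle \sigma \rangle \cong C_d$, so its order is divisible by a prime dividing $d$); this description is automorphism-invariant. Consequently $\alpha$ restricts to an automorphism of the cyclic group $\langle \sigma \rangle$, so $\alpha(\sigma)=\sigma^s$ for some $s \in U(e)$. Passing to the quotient $G/\langle \sigma \rangle \cong C_d$ forces $\alpha(\tau)=\sigma^a \tau^b$ with $\gcd(b,d)=1$; applying $\alpha$ to the defining relation $\tau \sigma \tau^{-1}=\sigma^k$ and using $\ord_e(k)=d$ then yields $k^{b-1} \equiv 1 \pmod{e}$, hence $b \equiv 1 \pmod{d}$. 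Thus every automorphism has the shape $\sigma \mapsto \sigma^s$, $\tau \mapsto \sigma^a \tau$; write $\alpha_{s,a}$ for this map.

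The crux is to determine which pairs $(s,a)$ actually occur, and here the condition $\alpha(\tau)^d=1_G$ is decisive. Using the power formula (\ref{pow-sig-a-t}), this reads $a\,S(k,d) \equiv 0 \pmod{e}$. I would evaluate $S(k,d)$ modulo the coprime factors $g$ and $z$ of $e$: since $k \equiv 1 \pmod{z}$ one gets $S(k,d) \equiv d \pmod{z}$, a unit (as $\gcd(d,z)=1$); while for each prime $q \mid g$ one has $k \not\equiv 1 \pmod{q}$ but $k^d \equiv 1 \pmod{q}$, so $S(k,d)=(k^d-1)/(k-1) \equiv 0 \pmod{q}$ and hence $S(k,d) \equiv 0 \pmod{g}$. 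Therefore $\gcd(S(k,d),e)=g$, and the constraint $a\,S(k,d) \equiv 0 \pmod{e}$ is equivalent to $z \mid a$. This leaves $\varphi(e)$ choices of $s$ and $g$ choices of $a$ (those with $\sigma^a \in G'=\langle \sigma^z \rangle$); a short check that each $\alpha_{s,a}$ respects all three relations and is surjective (its image contains $\sigma$ and $\tau$) shows these are precisely the automorphisms, and distinct pairs give distinct maps, so $|\Aut(G)|=g\varphi(e)$.

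Finally I would set $\theta=\alpha_{1,z}$ and $\phi_s=\alpha_{s,0}$ and verify (\ref{autom-rel}) by evaluating on $\sigma$ and $\tau$: an easy induction gives $\theta^j(\tau)=\sigma^{jz}\tau$, so $\theta^g=\id_G$ and $\theta$ has order exactly $g$; the identities $\phi_s\phi_t=\phi_{st}$ and $\phi_s\theta\phi_s^{-1}=\theta^s$ then follow directly from the definitions. Since $\langle \theta \rangle \cong C_g$ is normalised by the subgroup $\{\phi_s : s \in U(e)\} \cong U(e)$, meets it trivially, and the product of their orders equals $g\varphi(e)=|\Aut(G)|$, I obtain $\Aut(G)=\langle \theta \rangle \rtimes \{\phi_s\} \cong C_g \rtimes U(e)$ with $s$ acting on $C_g$ by $x \mapsto x^s$, as claimed. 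The main obstacle is the modular computation of $S(k,d)$, that is, establishing $\gcd(S(k,d),e)=g$; once this is in hand, everything else is routine verification.
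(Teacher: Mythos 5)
Your proof is correct and takes essentially the same route as the paper's: the same characteristic-subgroup argument gives $\alpha(\sigma)=\sigma^s$ and forces $b=1$, and the same two congruence computations ($S(k,d)\equiv 0 \pmod{q}$ for $q \mid g$, and $S(k,d)\equiv d \not\equiv 0 \pmod{q}$ for $q \mid z$, which you package as $\gcd(S(k,d),e)=g$) force $z \mid a$. The only difference is organizational: you classify all automorphisms $\alpha_{s,a}$ first and then single out $\theta$ and $\phi_s$, whereas the paper constructs $\theta$ and $\phi_s$ first and then shows there are no others.
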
 
\begin{proof}
We first verify the existence of the automorphisms $\theta$ and
$\phi_s$. Since $(\sigma^z \tau) \sigma (\sigma^z
\tau)^{-1} = \sigma^k$, (\ref{def-theta}) will determine an
automorphism $\theta$ provided that $\sigma^z \tau$ has order
$d$. By (\ref{pow-sig-a-t}), this will hold if $e \mid zS(k,d)$, that
is, if $g \mid S(k,d)$. But for each prime $q \mid g$, we have $k^d
\equiv 1 \not \equiv k \pmod{q}$, so that
$$ S(k,d) = \frac{k^d-1}{k-1} \equiv 0 \pmod{q}. $$ 
Thus $g \mid S(k,d)$, as required. This shows the existence of the
automorphism $\theta$. For $s \in U(e)$, the element $\sigma^s$ has
order $e$ and $\tau \sigma^s \tau^{-1}=(\sigma^s)^k$. It follows that
there is an automorphism $\phi_s$ as given in (\ref{def-phi-s}).

It is clear that $\theta$ has order $g$. The remaining
relations in (\ref{autom-rel}) are easily verified by checking them on
the generators $\sigma$, $\tau$ of $G$.

We have now shown that $\theta$ and the $\phi_s$ generate a subgroup
of $\Aut(G)$ isomorphic to $C_g \rtimes U(e)$. This subgroup
has order $g \varphi(e)$. It remains to show that there are no further
automorphisms. 

Let $\psi \in \Aut(G)$. As $\langle \sigma \rangle$ is a
characteristic subgroup of $G$, being the unique
subgroup of order $e$, we have $\psi(\sigma)=\sigma^s$ for
some $s \in U(e)$. Let $\psi(\tau)=\sigma^a \tau^b$ with $0 \leq
b<d$. Since $\psi$ must satisfy $\psi(\tau) \psi(\sigma)
\psi(\tau)^{-1}=\psi(\sigma)^k$, we have $\sigma^{sk^b} =
\sigma^{sk}$ and hence $b=1$. Thus, by (\ref{pow-sig-a-t}) again,
$$ \psi(\tau)^d = \sigma^{aS(k,d)}, $$
so that $a S(k,d) \equiv 0 \pmod{e}$. In particular, for each prime
$q$ dividing $z$, we have $q \mid aS(k,d)$. But $S(k,d) \equiv d \not
\equiv 0 \pmod{q}$ since $k \equiv 1 \pmod{q}$. Thus $q \mid a$. It
follows that $a=zc$ for some $c \in \Z$, so $\psi=\theta^c \phi_s$.
\end{proof}

We now consider the holomorph $\Hol(G) = G \rtimes \Aut(G)$ of $G$. We
write an element of this group as $[\alpha,\psi]$, where $\alpha \in
G$ and $\psi \in \Aut(G)$. The multiplication in $\Hol(G)$ is 
given by 
\begin{equation} \label{hol}
     [\alpha, \psi] [\alpha', \psi'] = [\alpha \psi(\alpha'),
  \psi \psi']. 
\end{equation}
(The subgroup $G$ in $\Hol(G)$ is therefore identified with the left
translations in $\Perm(G)$.) In view of Lemma \ref{automs}, an
arbitrary $x \in \Hol(G)$ can be written $x = [ \sigma^a \tau^b,
  \theta^c \phi_s]$, where $a \in \Z_e$, $s \in U(e)$, $b \in \Z_d$
and $c \in \Z_g$.  In Lemma \ref{x-pow} below, we will give a formula
for powers of $x$ in the special case $b=1$. We will then show in
Proposition \ref{b1} why this case is all we need. We first introduce
some further notation.

Define
\begin{equation} \label{def-T}
  T(k,s,j) = \sum_{h=0}^{j-1} S(s,h) k^{h-1} \mbox{ for } j \geq 1,
  \qquad  T(k,s,0)=0, 
\end{equation}
where $S(s,h)$ is given by (\ref{def-S}). Note that we then have
$T(k,s,1)=0$ and 
$$ T(k,s,j+1) = T(k,s,j) + k^{j-1} S(s,j) \mbox{ for } j \geq 0. $$ 

\begin{lemma} \label{x-pow}
Let $x=[\sigma^a \tau, \theta^c \phi_s]$. Then, for $j
\geq 0$, we have 
\begin{equation} \label{xj}
   x^j = [ \sigma^{A(j)} \tau^j, \theta^{cS(s,j)} \phi_{s^j}] 
\end{equation}
where $A(j) = aS(sk,j) + czkT(k,s,j)$. 
\end{lemma}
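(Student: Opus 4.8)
The plan is to prove \eqref{xj} by induction on $j$, multiplying on the right by $x$ at each step using the holomorph multiplication \eqref{hol}. The base cases $j=0$ and $j=1$ are immediate: for $j=0$ both sides equal the identity $[1_G,\id_G]$, recalling that $S(sk,0)=T(k,s,0)=S(s,0)=0$, while for $j=1$ one checks $A(1)=a$ directly from $S(sk,1)=1$ and $T(k,s,1)=0$. The one preliminary computation I need is the action on the generators of the automorphism $\psi_j:=\theta^{cS(s,j)}\phi_{s^j}$ appearing as the second component of $x^j$. From \eqref{def-theta} and \eqref{def-phi-s} one finds $\psi_j(\sigma)=\sigma^{s^j}$ and $\psi_j(\tau)=\sigma^{czS(s,j)}\tau$, since $\theta$ fixes $\sigma$ and $\theta^m(\tau)=\sigma^{mz}\tau$.

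For the inductive step I would compute $x^{j+1}=x^j x$ and treat the two components of \eqref{hol} separately. The automorphism component is $\psi_j\,\theta^c\phi_s$, and using the relations \eqref{autom-rel} --- in the form $\phi_{s^j}\theta^c=\theta^{cs^j}\phi_{s^j}$, obtained by iterating $\phi_s\theta\phi_s^{-1}=\theta^s$, together with $\phi_{s^j}\phi_s=\phi_{s^{j+1}}$ --- this collapses to $\theta^{c(S(s,j)+s^j)}\phi_{s^{j+1}}$. The identity $S(s,j)+s^j=S(s,j+1)$, immediate from the definition \eqref{def-S}, then gives exactly $\theta^{cS(s,j+1)}\phi_{s^{j+1}}$, as required.

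For the group-element component I would use $\psi_j(\sigma^a\tau)=\sigma^{as^j+czS(s,j)}\tau$ from the preliminary computation, so that the first component becomes $\sigma^{A(j)}\tau^j\cdot\sigma^{as^j+czS(s,j)}\tau$. Moving $\tau^j$ past the central block via $\tau^j\sigma^m=\sigma^{mk^j}\tau^j$, a consequence of $\tau\sigma\tau^{-1}=\sigma^k$, and collecting the $\tau$-powers gives the recursion
\[
  A(j+1)=A(j)+a(sk)^j+cz\,S(s,j)k^j .
\]
It then remains to verify that the closed form $A(j)=aS(sk,j)+czk\,T(k,s,j)$ satisfies this recursion. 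The $a$-term is handled by $S(sk,j+1)-S(sk,j)=(sk)^j$, while the $cz$-term is handled by the recursion $T(k,s,j+1)=T(k,s,j)+k^{j-1}S(s,j)$ recorded after \eqref{def-T}, which yields $czk\,(T(k,s,j+1)-T(k,s,j))=cz\,k^jS(s,j)$.

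I expect the only genuine bookkeeping difficulty to lie in this last step: correctly tracking the $\sigma$-exponent $A(j)$ through the non-commutativity of $\sigma$ and $\tau$, and recognising that the contribution of the $\theta$-part of the automorphism is precisely what the auxiliary function $T(k,s,j)$ is designed to accumulate. The factor $k^{-1}$ implicit in the $h=0$ term of \eqref{def-T} causes no trouble, since it multiplies $S(s,0)=0$.
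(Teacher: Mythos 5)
Your proof is correct and follows essentially the same route as the paper's own argument: induction on $j$ using the holomorph multiplication \eqref{hol}, collapsing the automorphism component via $\phi_{s^j}\theta^c=\theta^{cs^j}\phi_{s^j}$, and verifying the resulting recursion $A(j+1)=A(j)+a(sk)^j+cz\,k^jS(s,j)$ against the closed form using $S(sk,j+1)-S(sk,j)=(sk)^j$ and the recursion for $T$ stated after \eqref{def-T}. The only cosmetic differences are your separate (unneeded) base case $j=1$ and your preliminary computation of $\psi_j$ on the generators, which the paper carries out inline.
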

\begin{proof}
We argue by induction on $j$. When $j=0$, we have $S(s,0)=T(k,s,0)=0$
and $A(0)=0$, so (\ref{xj}) holds in this case.  Assuming (\ref{xj})
for $j$, we have from (\ref{hol}) that
\begin{eqnarray*}
  x^{j+1} & = &  [\sigma^{A(j)} \tau^j, \theta^{cS(s,j)}\phi_{s^j}]
  [\sigma^a \tau, \theta^c \phi_s] \\ 
              & = & [ \sigma^{A(j)} \tau^j (
    \theta^{cS(s,j)}\phi_{s^j}(\sigma^a \tau)),
    \theta^{cS(s,j)}\phi_{s^j} \theta^c \phi_s].  
\end{eqnarray*}
Thus, using (\ref{autom-rel}), the second component of $x^{j+1}$ is 
$$ \theta^{cS(s,j)} \phi_{s^j} \theta^c \phi_s = \theta^{cS(s,j)}
   \theta^{cs^j} \phi_{s^j} \phi_s = \theta^{cS(s,j+1)}
   \phi_{s^{j+1}}, $$ 
as required for (\ref{xj}). As for the first component of
$x^{j+1}$, since  
$$    \theta^{cS(s,j)}\phi_{s^j}(\sigma^a \tau) = \sigma^{as^j}
     \sigma^{czS(s,j)} \tau, $$ 
we have 
\begin{eqnarray*}
  \sigma^{A(j)} \tau^j ( \theta^{cS(s,j)}\phi_{s^j}(\sigma^a \tau)) & = & 
        \sigma^{A(j)} \tau^j    \sigma^{as^j}  \sigma^{czS(s,j) } \tau \\
           & = & \sigma^{A(j)} \sigma^{as^j k^j}
                 \sigma^{czS(s,j) k^j}  \tau^{j+1}. 
\end{eqnarray*}
We write this as $\sigma^{A'} \tau^{j+1}$, and calculate
\begin{eqnarray*}
  A' & = & A(j) + as^j k^j +czS(s,j) k^j \\
       & = & a(S(sk,j)+ (sk)^j)  + czk[T(k,s,j)+k^{j-1}S(s,j)] \\
       & = & aS(sk,j+1) + czk T(k,s,j+1) \\
        & = & A(j+1).
\end{eqnarray*}
Thus (\ref{xj}) holds with $j$ replaced by $j+1$. This completes the induction.
\end{proof}

\begin{proposition} \label{b1}
Let $C$ be a cyclic subgroup of $\Hol(G)$ which is regular on
$G$. Then $C$ is generated by some element 
$$ x =[\sigma^a \tau, \theta^c \phi_s ], $$
in which $\tau$ occurs with exponent $1$. In fact, $C$ contains
precisely $\varphi(e)$ generators of this type.  
\end{proposition}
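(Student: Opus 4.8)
The plan is to exploit the quotient $G/\langle\sigma\rangle$ to control the exponent $b$. Since $C$ acts regularly on the $n$-element set $G$, it is cyclic of order exactly $n$; write $C=\langle x\rangle$ with $x=[\sigma^a\tau^b,\theta^c\phi_s]$. First I would observe that $\langle\sigma\rangle$ is characteristic in $G$, being the unique subgroup of order $e$, so every automorphism preserves it; moreover, by the explicit generators in Lemma \ref{automs}, both $\theta$ and $\phi_s$ fix $\tau$ modulo $\langle\sigma\rangle$ (indeed $\theta(\tau)=\sigma^z\tau$ and $\phi_s(\tau)=\tau$), so every automorphism of $G$ induces the identity on the quotient $G/\langle\sigma\rangle\cong\Z_d$. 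Consequently the rule $\beta([\sigma^a\tau^b,\psi])=b$ defines a group homomorphism $\beta\colon\Hol(G)\to\Z_d$: this follows directly from the multiplication rule (\ref{hol}), since $\psi(\alpha')$ and $\alpha'$ have the same image in $\Z_d$ for every $\psi\in\Aut(G)$. Furthermore, the projection $G\to G/\langle\sigma\rangle\cong\Z_d$ is equivariant for the translation action of $\Z_d$ on itself through $\beta$.

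Next I would extract the constraint on $b$ from regularity. Because $C$ acts transitively on $G$, it acts transitively on the quotient $\Z_d$; but a subgroup of $\Z_d$ acting by translation is transitive only if it is all of $\Z_d$, so $\beta(C)=\Z_d$. As $\beta(x)=b$ generates $\beta(C)$, this forces $b\in U(d)$. The remaining task is then purely arithmetic. The generators of $C$ are exactly the powers $x^m$ with $m\in U(n)$, and since $\beta$ is a homomorphism such a power has $\tau$-exponent $\beta(x^m)=mb\bmod d$. Because $n=dgz$ is squarefree, the factors $d$, $g$, $z$ are pairwise coprime and $U(n)\cong U(d)\times U(g)\times U(z)$. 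Choosing $m$ whose $U(d)$-component is $b^{-1}$ and whose other components are arbitrary produces a generator with $\tau$-exponent $1$, which proves the first assertion. Counting these generators amounts to counting $m\in U(n)$ with $mb\equiv 1\pmod d$; under the reduction homomorphism $U(n)\to U(d)$ the fibre over the single value $b^{-1}$ has size $|U(n)|/|U(d)|=\varphi(n)/\varphi(d)=\varphi(e)$, which gives the stated count of $\varphi(e)$ generators of this type.

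The main obstacle is the structural input at the start, namely the verification that $\beta$ is a well-defined homomorphism, equivalently that every automorphism of $G$ acts trivially on $G/\langle\sigma\rangle$; this is where Lemma \ref{automs} is essential, since it lets me reduce the claim to the two families of generators $\theta$ and $\phi_s$. Once this fact and the equivariance of the projection are in place, the transitivity argument that yields $b\in U(d)$ and the final count via the Chinese Remainder Theorem are both routine. I would take care to state the translation action precisely, so that the implication ``$C$ transitive on $\Z_d$ $\Rightarrow$ $\beta(C)=\Z_d$'' is unambiguous.
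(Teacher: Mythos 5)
Your proposal is correct and takes essentially the same route as the paper: your homomorphism $\beta\colon \Hol(G)\to \Z_d$ is precisely the paper's map $[\sigma^a\tau^b,\psi]\mapsto \tau^b$ (justified in the same way, by checking $\theta$ and $\phi_s$ from Lemma \ref{automs}), and your transitivity argument on the quotient is the paper's observation that the exponents $bj$ must exhaust all residues mod $d$. Your CRT-based lift of $b^{-1}$ to $m\in U(n)$ and the fibre count $\varphi(n)/\varphi(d)=\varphi(e)$ correspond exactly to the paper's choice of $f$ with $bf\equiv 1 \pmod{d}$ and $\gcd(f,n)=1$, followed by its count of generators $x^j$ with $j\equiv 1\pmod{d}$ and $\gcd(j,n)=1$.
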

\begin{proof}
For any $\psi \in \Aut(G)$ and arbitrary $\alpha=\sigma^a \tau^b \in
G$, we have $\psi(\alpha)=\sigma^{a'} \tau^b$ for some $a' \in \Z$.
This is clear from Lemma \ref{automs} as it holds for $\psi=\phi_s$
and $\psi=\theta$. It then follows from (\ref{hol}) that the function
$\Hol(G) \to \langle \tau \rangle$, given by $[\sigma^a \tau^b, \psi]
\mapsto \tau^b$, is a group homomorphism. (This is not automatic,
since the function $\Hol(G) \to G$ given by $[\sigma^a \tau^b, \psi]
\mapsto \sigma^a \tau^b$, is not in general a homomorphism.) In
particular, for any $x=[\sigma^a \tau^b, \theta^c \phi_s] \in \Hol(G)$
and any $j \geq 1$, we have $x^j = [\sigma^A \tau^{bj}, \psi]$ for
some $A \in \Z_e$ and some $\psi \in \Aut(G)$, both depending on $j$. The
permutation $x^j$ of $G$ takes $1_G$ to $\sigma^A \tau^{bj}$.
 
Now let $C$ be a regular cyclic subgroup of $\Hol(G)$, and let
$x=[\sigma^a \tau^b, \theta^c \phi_s]$ be a generator. Thus $x$ has
order $n$. Since $C$ is transitive on $G$, the elements $\sigma^A
\tau^{bj}$, as $j$ varies, must run through all elements of $G$.
In particular, $bj$ must run through all residue classes modulo $d$.  
Hence $\gcd(b,d)=1$, and there exists $f \geq 1$ with $bf \equiv
1 \pmod{d}$. Since $\gcd(d,e)=1$, we may further assume that
$\gcd(f,e)=1$. Then $\gcd(f,n)=1$, so that $x^f$ is also a generator
of $C$, and $x^f=[\sigma^{A'} \tau^{bf}, \psi']=[\sigma^{A'} \tau,
  \psi']$ for some $A'$ and $\psi'$. Replacing $x$ by $x^f$, we
have found a generator of $C$ with $b=1$, as required.  

Now let $x$ be any such generator. Then $x^j$ will be
another if and only if $\gcd(j,n)=1$ and $j \equiv 1
\pmod{d}$. The number of such generators is therefore
$\varphi(n)/\varphi(d)=\varphi(e)$.
\end{proof}

\section{Hopf-Galois structures of type $G$} \label{type-G}

As a first step towards determining when the element $x$ in
Proposition \ref{b1} generates a regular subgroup, we give a condition
for transitivity. 

\begin{lemma} \label{reg-crit}
Let $x=[\sigma^a\tau, \theta^c \phi_s] \in \Hol(G)$. Then the subgroup
$\langle x \rangle$ of $\Hol(G)$ acts transitively on $G$ if and only if
$\langle x^d \rangle$ acts transitively on $\langle \sigma \rangle$.
\end{lemma}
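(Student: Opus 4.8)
The plan is to prove the two directions of this equivalence separately, exploiting the structure of $G$ as an extension of $\langle \tau \rangle \cong C_d$ by the characteristic cyclic subgroup $\langle \sigma \rangle \cong C_e$. First I would recall from the proof of Proposition~\ref{b1} that the map $[\sigma^A \tau^B, \psi] \mapsto \tau^B$ is a group homomorphism $\Hol(G) \to \langle \tau \rangle$, so that for our element $x = [\sigma^a \tau, \theta^c \phi_s]$ we have $x^j = [\sigma^{A(j)} \tau^j, \theta^{cS(s,j)}\phi_{s^j}]$ (as given explicitly by Lemma~\ref{x-pow}), and in particular $x^j$ sends $1_G$ to $\sigma^{A(j)}\tau^j$. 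Since $\gcd(d,e)=1$, the exponent $j$ modulo $d$ is controlled by $j \bmod d$ alone, and as $j$ ranges over $0, 1, \dots, n-1$ the residue $j \bmod d$ cycles through all of $\Z_d$.

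The forward direction is essentially immediate: if $\langle x \rangle$ is transitive on $G$, then it is transitive on the quotient obtained by collapsing $\langle \sigma \rangle$, but more to the point its action must in particular reach every element of the form $\sigma^m$ (those with $\tau$-exponent $0$); these are exactly the orbit points $x^j \cdot 1_G$ for which $d \mid j$, i.e.\ the points $\langle x^d \rangle \cdot 1_G$. Hence transitivity of $\langle x \rangle$ on $G$ forces $\langle x^d \rangle$ to be transitive on $\langle \sigma \rangle$.

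For the converse I would argue that transitivity of $\langle x^d \rangle$ on $\langle \sigma \rangle$, combined with the fact that the $\tau$-exponents $j \bmod d$ sweep out all of $\Z_d$ as $j$ varies, suffices to reach every $\sigma^m \tau^b$. Concretely, given a target $\sigma^m \tau^b$, first choose $j_0$ with $j_0 \equiv b \pmod d$ so that $x^{j_0}\cdot 1_G = \sigma^{A(j_0)}\tau^b$ has the correct $\tau$-component; then I would use the transitivity of $\langle x^d\rangle$ on $\langle\sigma\rangle$ to adjust the $\sigma$-component, writing the target as $x^{d\ell} x^{j_0}\cdot 1_G$ for a suitable $\ell$. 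The point to check carefully is that left-multiplying the orbit point $\sigma^{A(j_0)}\tau^b$ by the powers of $x^d$ (which, since $d\mid d\ell$, act with trivial $\tau$-exponent and hence translate only within a coset of $\langle\sigma\rangle$) indeed moves the $\sigma$-part through all residues modulo $e$ while fixing $\tau^b$. This is where the $\gcd(d,e)=1$ hypothesis and the explicit conjugation action of $\tau$ on $\langle\sigma\rangle$ enter, and it is the step I expect to require the most care.

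The main obstacle is bookkeeping the interaction between the two components under the semidirect-product multiplication (\ref{hol}): the $\sigma$-exponent $A(j)$ is a genuinely nontrivial function of $j$ (through $S(sk,j)$ and $T(k,s,j)$), so I must verify that applying $x^d$ repeatedly to a fixed coset representative $\sigma^{A(j_0)}\tau^b$ really does translate the $\sigma$-exponent by the same increments as when $\langle x^d\rangle$ acts on $\langle\sigma\rangle$ starting from $1_G$ — up to a fixed automorphism twist coming from $\tau^b$, which is a bijection of $\langle\sigma\rangle$ and so does not affect transitivity. Making this precise, ideally by noting that each orbit of $\langle x^d\rangle$ on $G$ lies in a single coset of $\langle\sigma\rangle$ and that the $d$ distinct cosets are hit by $x^0, x^1, \dots, x^{d-1}$, will complete the count $|G| = e \cdot d$ and hence transitivity.
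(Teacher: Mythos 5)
Your proposal is correct in substance, and its forward direction is exactly the paper's: elements of $\langle\sigma\rangle$ can only be reached as $x^j\cdot 1_G$ with $d\mid j$, by Lemma~\ref{x-pow}. Where you genuinely diverge is in the converse. You \emph{push forward} from $1_G$: first match the $\tau$-exponent via $x^{j_0}$ with $j_0\equiv b\pmod d$, then try to sweep the $\sigma$-part of $\sigma^{A(j_0)}\tau^b$ using powers of $x^d$. The paper instead \emph{pulls back} the target: given $\sigma^i\tau^j$, it observes that $x^{-j}\cdot\sigma^i\tau^j\in\langle\sigma\rangle$ (using the homomorphism $\Hol(G)\to\langle\tau\rangle$), and then applies the transitivity hypothesis directly to that point to find $h$ with $x^{dh-j}\cdot\sigma^i\tau^j=1_G$. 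The pull-back applies $\langle x^d\rangle$ to a point that actually lies in $\langle\sigma\rangle$, so the ``twist'' you worry about never arises; your push-forward applies $\langle x^d\rangle$ to $\sigma^{A(j_0)}\tau^b\notin\langle\sigma\rangle$, and this is precisely the step you flag but leave incomplete. It does close, and cleanly, though not quite for the reasons you suggest (neither $\gcd(d,e)=1$ nor any delicate bookkeeping with $A(j)$ is needed): since $\langle x\rangle$ is abelian, $x^{d\ell}\cdot\bigl(x^{j_0}\cdot 1_G\bigr)=x^{j_0}\cdot\bigl(x^{d\ell}\cdot 1_G\bigr)$, and by hypothesis $x^{d\ell}\cdot 1_G$ sweeps all of $\langle\sigma\rangle$ as $\ell$ varies; moreover $x^{j_0}$ maps $\langle\sigma\rangle$ bijectively onto the coset $\langle\sigma\rangle\tau^{j_0}$, because by (\ref{hol}) and Lemma~\ref{automs} one computes $x^{j_0}\cdot\sigma^u=\sigma^{A(j_0)}\tau^{j_0}\sigma^{us^{j_0}}=\sigma^{A(j_0)+u(sk)^{j_0}}\tau^{j_0}$, and $u\mapsto A(j_0)+u(sk)^{j_0}$ is an affine bijection of $\Z_e$ since $sk\in U(e)$. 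With that inserted, your orbit-per-coset counting ($d$ cosets, each of size $e$) completes the proof; the paper's pull-back buys the same conclusion in two lines with no twist to track, which is the main economy of its route.
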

\begin{proof}
Let $\langle x \rangle$ be transitive on $G$. Then, for each $i \in
\Z$, there is some $j$ such $x^j \cdot 1_G = \sigma^i$. Then $d \mid
j$ by (\ref{xj}). Thus $\langle x^d \rangle$ acts transitively on
$\langle \sigma \rangle$.  Conversely, suppose that $\langle x^d
\rangle$ acts transitively on $\langle \sigma \rangle$. Let $\sigma^i
\tau^j \in G$. By Lemma \ref{x-pow}, we have $x^{-j} \cdot \sigma^i
\tau^j \in \langle \sigma \rangle$. As $\langle x^d \rangle$ is
transitive on $\langle \sigma \rangle$, there is some $h \in \Z$ with
$x^{dh-j} \cdot \sigma^i \tau^j =1_G$. Thus the arbitrary element
$\sigma^i \tau^j$ lies in the same orbit under $\langle x \rangle$ as
$1_G$, so that $\langle x \rangle$ is transitive on $G$.
\end{proof}

In order to study the orbits of $\langle x^d \rangle$ on $\langle
\sigma \rangle$, we examine the congruence properties of the sums $S(k,j)$
and $T(k,s,j)$ defined in (\ref{def-S}) and (\ref{def-T}) when $j$ is
a multiple of $d$. 

\begin{proposition} \label{ST-cong}
Let $q$ be a prime dividing $e$. In the following, all congruences are
modulo $q$. We will omit the modulus for brevity. Abusing notation,
we will write $\frac{u}{v}$ in such a congruence to denote $uv^*$
where $vv^* \equiv 1$. (This notation will only be used when $v \not
\equiv  0$.) 

\begin{itemize}
\item[(i)] For any $s$, $i \in \Z$ with $i \geq 0$, we have 
$$   S(s,di) \equiv \begin{cases} 
   di & \mbox{if } s \equiv 1; \cr \cr
   \displaystyle{\frac{s^{di}-1}{s-1}} &
   \mbox{otherwise}. \end{cases} $$
\item[(ii)] Recall that $k^d \equiv 1$. If also $k \not \equiv 1$
  then, for any $s$, $i \in \Z$ with $i \geq 0$, we have
$$ T(k,s,di) \equiv \begin{cases}
    \displaystyle{\frac{di}{k(k-1)}} & \mbox{if } s \equiv 1; \cr \cr  
    \displaystyle{\frac{di}{k(s-1)}} & \mbox{if } sk \equiv 1; \cr \cr
   \displaystyle{\frac{ (s^{di}-1)}{k(s-1)(sk-1)}} &
   \mbox{otherwise}. \end{cases}  $$ 
\end{itemize}
\end{proposition}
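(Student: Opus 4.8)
The plan is to prove (i) directly from the geometric-series structure of $S$, and then to reduce (ii) to (i) by an algebraic identity, handling the degenerate case $s \equiv 1$ separately. For part (i), I would simply read off the two cases from the definition $S(s,di)=\sum_{i'=0}^{di-1} s^{i'}$: when $s \equiv 1$ every summand is $\equiv 1$, giving $S(s,di) \equiv di$; when $s \not\equiv 1$ the element $s-1$ is a unit modulo $q$, so the closed form $S(s,di) \equiv (s^{di}-1)/(s-1)$ holds. The one observation to carry forward is that, since the hypothesis $k^d \equiv 1$ forces $k^{di} \equiv 1$, applying part (i) to $s=k$ (with $k \not\equiv 1$) yields the vanishing $S(k,di) \equiv 0$, which will drive the simplifications in part (ii).

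The heart of the argument is part (ii), and I would first treat the generic case $s \not\equiv 1$. Substituting $S(s,h) \equiv (s^h-1)/(s-1)$ into the definition of $T$, factoring out $1/(k(s-1))$, and splitting $(s^h-1)k^h$ as $(sk)^h - k^h$ yields the key identity
\[
   T(k,s,di) \equiv \frac{S(sk,di) - S(k,di)}{k(s-1)} \pmod q.
\]
Because $S(k,di) \equiv 0$, this collapses to $S(sk,di)/(k(s-1))$, and I would finish by invoking part (i) for the argument $sk$. If $sk \equiv 1$ then $S(sk,di) \equiv di$, giving the second line of the claim; otherwise $S(sk,di) \equiv (s^{di}-1)/(sk-1)$ after using $(sk)^{di}=s^{di}k^{di} \equiv s^{di}$, giving the third line. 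Note that the subcase $sk \equiv 1$ is consistent with $s \not\equiv 1$, since $s \equiv 1$ would force $sk \equiv k \not\equiv 1$.

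I expect the main obstacle to be the remaining case $s \equiv 1$, where the closed form for $S(s,h)$ degenerates and the identity above is unavailable. Here $S(s,h) \equiv h$ for every $h$, so $T(k,1,di) \equiv k^{-1}\sum_{h=0}^{di-1} h\,k^h$, and the real work is to evaluate $\sum_{h=0}^{di-1} h\,k^h$ modulo $q$, which cannot be read off from a geometric series. I would handle this by the telescoping trick: writing $P=\sum_{h=0}^{di-1} h\,k^h$ and forming $(1-k)P$, all intermediate coefficients become $1$, so the sum collapses to $S(k,di) - 1 - (di-1)k^{di}$. Applying $k^{di} \equiv 1$ and $S(k,di) \equiv 0$ from part (i) gives $(1-k)P \equiv -di$, hence $P \equiv di/(k-1)$ and $T(k,1,di) \equiv di/(k(k-1))$, which is precisely the first line. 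This keeps the whole proof inside congruence arithmetic modulo $q$ and avoids any appeal to formal differentiation of the geometric sum.
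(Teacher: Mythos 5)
Your proof is correct and follows essentially the same route as the paper's: part (i) read off from the geometric series, the generic case of (ii) reduced to $\bigl(S(sk,di)-S(k,di)\bigr)/\bigl(k(s-1)\bigr)$ with the $\sum k^j$ term vanishing, and the degenerate case $s \equiv 1$ handled by the same telescoping trick (the paper multiplies $T$ by $k-1$ where you multiply $P=kT$ by $1-k$, a purely cosmetic difference). Your phrasing of the generic case as an explicit reduction to part (i) applied to $sk$ is a slightly tidier packaging of the identical computation, and all unit checks ($k$, $k-1$, $s-1$, $sk-1$ invertible mod $q$) are in place.
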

\begin{proof}
(i) This is immediate.

(ii) The case $i=0$ is clear, so assume $i \geq 1$. First let $s
  \equiv 1$. Then $S(s,j) \equiv j$, so 
\begin{eqnarray*}
   (k-1)T(k,s,di) 
    & = & \sum_{j=0}^{di-1} (k-1)j  k^{j-1} \\
    & = & \sum_{j=0}^{di-1} j k^j -  \sum_{j=1}^{di-1}j k^{j-1} \\
   & = & \sum_{j=0}^{di-1} j k^j - \sum_{j=0}^{di-2} (j+1) k^j \\
   & = & \sum_{j=0}^{di-1} j k^j - \sum_{j=0}^{di-1} (j+1) k^j + dik^{di-1} \\
   & = & - \sum_{j=0}^{di-1} k^j +dik^{di-1} .
\end{eqnarray*}
As $k^d \equiv 1 \not \equiv k$, we then have
$$ (k-1) T(k,s,di) \equiv dik^{di-1} , $$
giving the result for $s \equiv 1$.

If $s \not \equiv 1$ then
\begin{eqnarray*} 
   T(k,s,di) 
 & \equiv &  \sum_{j=0}^{di-1} \left( \frac{s^j-1}{s-1} \right) k^{j-1} \\
 &  \equiv & \frac{1}{k(s-1)} \left[ \sum_{j=0}^{di-1} (sk)^j -
    \sum_{j=0}^{di-1} k^j \right]. 
\end{eqnarray*}
The second sum vanishes mod $q$. The first is congruent to $di$ if $sk
\equiv 1$, giving the result in this case. Finally if $sk \not \equiv
1 \not \equiv s$ then
\begin{eqnarray*}
    T(k,s,di)
  & \equiv & \frac{1}{k(s-1)}  \sum_{j=0}^{di-1} (sk)^j  \\
   &  \equiv &  \frac{1}{k(s-1)(sk-1)} \left( (sk)^{di}-1 \right)  \\
 & \equiv & \frac{1}{k(s-1)(sk-1)} \left( s^{di}-1 \right).
\end{eqnarray*}
\end{proof}

\begin{lemma}  \label{reg-conds}
Let $x = [\sigma^a \tau, \theta^c \phi_s ] \in \Hol(G)$, so
$a \in \Z_e$, $c \in \Z_g$, $s \in U(e)$. Then 
$x$ generates a regular cyclic subgroup of $\Hol(G)$ if and only if the
triple $(s,a,c)$ satisfies the following conditions: 
\begin{itemize}
\item[(i)] for each prime $q \mid z$, we have $s \equiv 1 \pmod{q}$ and
  $q \nmid a$;  
\item[(ii)]  for each prime $q \mid g$, either
$$ s \equiv 1 \pmod{q} \mbox{ and } c \not \equiv 0 \pmod{q}, 
   \mbox{ or } $$
$$ s\equiv k^{-1} \pmod{q} \mbox{ and } (s-1)a+cz  \not \equiv 0 \pmod{q}. $$
\end{itemize}
\end{lemma}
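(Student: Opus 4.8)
The plan is to separate the regularity of $\langle x \rangle$ into a transitivity statement and then reduce the latter to a congruence computation modulo each prime $q \mid e$.

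First I would observe that, because $\langle x\rangle$ is abelian and acts faithfully on $G$ (it is a subgroup of $\Perm(G)$), it is regular on $G$ as soon as it is transitive: for an abelian transitive group all point-stabilisers are conjugate and hence equal, and by transitivity their common value fixes every point, so it is trivial. This removes any need to compute the order of $x$ in $\Hol(G)$ and reduces the problem to deciding when $\langle x \rangle$ is transitive on $G$.

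Next, by Lemma \ref{reg-crit}, transitivity of $\langle x\rangle$ on $G$ is equivalent to transitivity of $\langle x^d\rangle$ on $\langle\sigma\rangle$. Since $\tau^{di}=1_G$, Lemma \ref{x-pow} shows that the orbit of $1_G$ under $\langle x^d\rangle$ is $\{\sigma^{A(di)} : i \geq 0\}$ with $A(di)=aS(sk,di)+czk\,T(k,s,di)$. As $e$ is squarefree, the Chinese Remainder Theorem identifies $\langle\sigma\rangle \cong \Z_e$ with $\prod_{q\mid e}\Z_q$, so $\langle x^d\rangle$ is transitive precisely when, for every prime $q\mid e$, the residues $A(di) \bmod q$ exhaust $\Z_q$ as $i$ varies. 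I would then evaluate $A(di) \bmod q$ using Proposition \ref{ST-cong}, treating the primes $q\mid z$ and $q\mid g$ separately. For $q\mid z$ the factor $z$ is $\equiv 0\pmod q$, so the $T$-term drops out, and $k\equiv 1$, leaving $A(di)\equiv aS(s,di)$; this runs through $\Z_q$ exactly when $s\equiv1$ and $q\nmid a$ (using $\gcd(d,e)=1$), giving (i). For $q\mid g$ we have $k\not\equiv1$ and $z$ invertible mod $q$, and the three cases of Proposition \ref{ST-cong}(ii), combined with $k^{di}\equiv 1$, show that $A(di)$ is linear in $i$, and hence surjective onto $\Z_q$ under the stated non-vanishing conditions, exactly when $s\equiv1$ or $s\equiv k^{-1}$; these yield the two alternatives of (ii).

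The step I expect to be the main obstacle is the case analysis for $q\mid g$: one must check that in the remaining case $s\not\equiv 1,\,k^{-1}\pmod q$ transitivity always fails, because there $A(di)$ depends on $i$ only through $s^{di}$, which runs over the cyclic subgroup $\langle s^d\rangle$ of $U(q)$ and so takes at most $q-1$ values, too few to cover $\Z_q$. Keeping the three sub-cases of Proposition \ref{ST-cong}(ii) straight, and correctly simplifying the resulting expressions using $k^{di}\equiv1$ and (in the surviving cases) $s^d\equiv 1 \pmod q$, is where the care is needed; once done, collecting the per-prime conditions yields exactly (i) and (ii).
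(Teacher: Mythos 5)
Your proposal is correct and follows essentially the same route as the paper: reduction to transitivity of $\langle x^d\rangle$ on $\langle\sigma\rangle$ via Lemma \ref{reg-crit}, the orbit formula $A(di)=aS(sk,di)+czk\,T(k,s,di)$ from Lemma \ref{x-pow}, and the same prime-by-prime case analysis via Proposition \ref{ST-cong} combined with the Chinese Remainder Theorem (your remark that $A(di)$ is linear in $i$ modulo $q$ in the surviving cases, and factors through $s^{di}\in U(q)$ in the excluded case, is exactly the justification the paper uses for the CRT step and for ruling out $s\not\equiv 1,k^{-1}$). The only deviation is at the end: where the paper upgrades transitivity to regularity by verifying $A(de)\equiv 0\pmod{e}$ so that $x^n=1$, you instead invoke the standard fact that a transitive abelian subgroup of $\Perm(G)$ is automatically regular, a valid and slightly slicker shortcut.
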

\begin{proof}
Suppose that $\langle x \rangle$ is regular, and hence transitive, on
$G$. By Lemma \ref{reg-crit}, $\langle x^d \rangle$ is transitive on
$\langle \sigma \rangle$.  It follows using Lemma \ref{x-pow} that the
expression
$$ A(di) = aS(sk,di)+czkT(k,s,di) $$ represents all residue classes
mod $e$ as $i$ varies.  In particular, $A(di)$ represents every
residue class mod $q$ for each prime factor $q$ of $e$.  We
investigate this condition for each $q$ in turn. Again, we omit
the modulus in congruences modulo $q$. 

First, let $q \mid z$, so $k \equiv 1$. 
If $s \not \equiv 1$, then $sk \not \equiv 1$, so by
Proposition \ref{ST-cong}(i) we have 
$$ A(di) \equiv \frac{a(s^{di}-1)}{s-1}, $$ 
which cannot represent all residue classes mod $q$ since there is no
$i$ such that $s^{di} \equiv 0$. On the other hand, if $s
\equiv 1$ then 
\begin{equation} \label{k-1} 
  A(di)  \equiv adi.
\end{equation}
Since $q \nmid d$, this represents all residue classes mod $q$
precisely when $q \nmid a$. Thus (i) holds.

Now let $q \mid g$, so $k \not \equiv 1$ but $k^d \equiv 1$. If $s
\not \equiv 1$ and $s \not \equiv k^{-1}$, then, using both parts of
Proposition \ref{ST-cong}, we have
\begin{eqnarray*}  
 A(di)  & \equiv &  \left( \frac{(sk)^{di}-1}{sk-1} \right) a + czk
 \left(\frac{s^{di}-1}{k(s-1)(sk-1)}\right) \\
 & = & \frac{(s^{di}-1)}{(s-1)(sk-1)}\left((s-1) a + cz\right). 
\end{eqnarray*}
Again, this cannot represent all residue classes mod $q$ since
$s^{di} \not \equiv 0$. 

It remains to consider the two
special cases $s \equiv 1 \not \equiv k$ and $s \equiv
k^{-1}\not \equiv 1$. 

If $s \equiv 1 \not \equiv k$ then, as $(sk)^d \equiv k^d
\equiv 1$, we have 
\begin{equation} \label{s-1}
   A(di) \equiv  czk \; \left(\frac{di}{k(k-1)} \right) = \frac{czdi}{k-1}.
\end{equation}
As $q \nmid zd$, this represents all residue classes mod $q$
precisely when $q \nmid c$, giving the first case in (ii).

If $s \equiv k^{-1} \not \equiv 1$ then
\begin{eqnarray} 
  A(di) & \equiv &  adi +  czk \left(\frac{di}{k(s-1)} \right)
         \nonumber \\
     &  \equiv & \frac{di}{s-1} \; \left( (s-1)a+cz \right).
                 \label{sk-1}     
\end{eqnarray}
This represents all residue classes mod $q$ precisely when $(s-1)a+cz
\not \equiv 0$, giving the second case in (ii).

We have now shown that if $x$ generates a regular cyclic subgroup,
then (i) and (ii) hold. 

Conversely, suppose that (i) and (ii) hold. Then, by Proposition
\ref{ST-cong}, the congruences (\ref{k-1}), (\ref{s-1}) and
(\ref{sk-1}) hold modulo all relevant $q$. For each prime $q \mid e$,
we then have that $A(di)$ represents all residue classes mod $q$ as
$i$ runs through any complete set of residues mod $q$. By the Chinese
Remainder Theorem, $A(di)$ then ranges through all residue classes mod
$e$ as $i$ does. By Lemma \ref{x-pow}, $\langle x^d \rangle$ is then
transitive on $\langle \sigma \rangle$, so $\langle x \rangle$ is
transitive on $G$ by Lemma \ref{reg-crit}. Finally, (\ref{k-1}),
(\ref{s-1}) and (\ref{sk-1}) show that $A(de) \equiv 0 \pmod{e}$, so that
$x^n = 1_G$. Hence $\langle x \rangle$ is regular on $G$.
\end{proof}

\begin{proof}[Proof of Theorem \ref{HGS-count}]
By Proposition \ref{b1}, any regular cyclic subgroup of $\Hol(G)$ is
generated by an element $x$ as in Lemma \ref{reg-conds}. We count the
number of triples $(s,a,c)$ satisfying the conditions there. As there
is only one possibility for $s \bmod q$ when $q \mid z$ but two when
$q \mid g$, there are $2^{\omega(g)}$ possibilities for $s \pmod
e$. Let us fix $s$ and consider the possibilities for $a$ and $c$. For
each prime $q \mid z$, condition (i) in Lemma \ref{reg-conds} excludes
one possibility for $a \bmod q$. For each $q \mid g$, we may choose $a
\bmod q$ arbitrarily, and then, in either case of condition (ii), one
possibility for $c \bmod q$ is excluded. Thus we have $\varphi(z)g$
choices for $a \bmod e$, and then $\varphi(g)$ choices for $c \bmod
g$.  The number of elements $x=[\sigma^a \tau, \theta^c \phi_s]$ which
generate a regular subgroup is therefore $2^{\omega(g)} \varphi(z)g
\varphi(g)$. By Proposition \ref{b1}, each regular cyclic subgroup
contains $\varphi(e)=\varphi(z)\varphi(g)$ such generators, so there
are $2^{\omega(g)}g$ of these subgroups. Thus, using Lemma
\ref{count-HGS} and Lemma \ref{automs} (and writing $C_n$ for the
cyclic group of order $n$), we find that the number of Hopf-Galois
structures of type $G$ is
$$ \frac{|\Aut(C_n)|}{|\Aut(G)|} \, 2^{\omega(g)} g = 
   \frac{\varphi(n)}{g\varphi(e)} 2^{\omega(g)} g = 2^{\omega(g)}
   \varphi(d). $$   
\end{proof}

\section{Proof of Theorem \ref{main}} \label{proof-main}

In this section, we obtain the total number of Hopf-Galois structures
on a cyclic field extension of squarefree degree $n$, thereby
completing the proof of Theorem \ref{main}. 

For each factorisation $n=dgz$, we have seen in Proposition
\ref{isom-g} that the number of corresponding isomorphism types of
group $G$ is  
$$ \varphi(d)^{-1} \sum_{f \mid g} \mu\left( \frac{g}{f} \right) 
\prod_{p \mid d} (p^{v(p,f)}-1).  $$ We have also seen in Theorem
\ref{HGS-count} that there $2^{\omega(g)} \varphi(d)$ Hopf-Galois
structures of each of these types. To obtain the total number of
Hopf-Galois structures, we simply multiply these two quantities and
sum over factorisations of $n$. This yields
$$ \sum_{dgz=n} 2^{\omega(g)} \varphi(d) \left( \varphi(d)^{-1} 
   \sum_{f \mid g} \mu\left( \frac{g}{f} \right) 
         \prod_{p \mid d} (p^{v(p,f)}-1) \right) . $$
Setting $t=g/f$ and noting that $\omega(g)=\omega(t)+\omega(f)$, we
can rewrite the previous sum as  
$$ \sum_{dftz=n} \mu(t) 2^{\omega(t)} 2^{\omega(f)} 
         \prod_{p \mid d} (p^{v(p,f)}-1). $$
Let $m=tz$, and observe that $\mu(t)=(-1)^{\omega(t)}$. The
sum then becomes
$$  \sum_{dfm=n} \left(\sum_{t \mid m} (-2)^{\omega(t)}\right) 2^{\omega(f)} 
         \prod_{p \mid d} (p^{v(p,f)}-1). $$
Recall that a function $F$ on the natural numbers is said to be
multiplicative if $F(rs)=F(r)F(s)$ whenever $\gcd(r,s)=1$. The
function $t \mapsto (-2)^{\omega(t)}$ is clearly multiplicative, and
hence so is the function $m \mapsto \sum_{t \mid m}
(-2)^{\omega(t)}$. However, evaluating this last function at a prime
$q$ gives $(-2)^{\omega(1)}+(-2)^{\omega(q)} = 1 + (-2) =-1=\mu(q)$. As
$\mu$ is also multiplicative, it follows
that $\sum_{t \mid m} (-2)^{\omega(t)} = \mu(m)$ for
squarefree $m$. (This is not true for arbitrary natural numbers $m$.)
Hence the total number of Hopf-Galois 
structures on a cyclic extension of squarefree degree $n$ is 
$$ \sum_{dfm=n} \mu(m) 2^{\omega(f)} 
         \prod_{p \mid d} (p^{v(p,f)}-1). $$
After a change of notation, this gives the formula
(\ref{HGS-formula}), completing the proof of Theorem \ref{main}.

\section{Examples}
In this section, we give some examples and show how several results in
the literature can be obtained as special cases of our
results. Throughout, $n$ is a squarefree integer and $L/K$ is a cyclic
Galois extension of degree $n$.

\subsection{Cyclic Hopf-Galois Structures} \label{cyclic-ex}
The group $G(d,e,k)$ in Lemma \ref{sf-class} is cyclic only when
$d=1$, $e=n$. Indeed, this is the only case where $G(d,e,k)$ is
abelian, or even nilpotent. In this case $z=n$, $g=1$, and Theorem
\ref{HGS-count} shows that there is only one Hopf-Galois structure of
cyclic (or abelian, or nilpotent) type on $L/K$. This can also be seen
from \cite[Theorem 2]{nilp}. The unique cyclic Hopf-Galois structure
is of course the classical one.

When $\gcd(n, \varphi(n))=1$, there are no other groups
$G(d,e,k)$, and hence there are no Hopf-Galois structures on $L/K$
beyond the classical one. This was shown, together with its converse,
in \cite{unique}. The case that $n$ is prime occurs in \cite{Ch89}.

\subsection{Dihedral Hopf-Galois Structures} \label{dihedral}
Let $n=2m$ where $m$ is an odd squarefree
number. The group $G(d,e,k)$ in Lemma \ref{sf-class} is dihedral when
$d=2$ and $k = -1 \in U(e)$. Then $e=g=m$. It follows from
Theorem \ref{HGS-count} that a cyclic extension of degree $n$ admits
$2^{\omega(m)}$ Hopf-Galois structures of dihedral type.

\subsection{Two Primes}
Let $n=pq$ for primes $p>q$. We assume that $q \mid (p-1)$ (since
otherwise the only group of order $n$ is the cyclic group $C_n$).  Up
to isomorphism, there are two groups of order $n$, the cyclic group
$C_n$ (with $d=1$, $g=1$, $e=z=pq$) and the metabelian group $M=C_p
\rtimes C_q$ where $C_q$ acts nontrivially on $C_p$ (so $d=q$,
$e=g=p$, $z=1$). As we have seen in \S\ref{cyclic-ex}, a cyclic
extension of degree $n$ admits just one Hopf-Galois structure of
cyclic type (namely the classical one). By Theorem \ref{HGS-count}, it
also admits $2^{\omega(p)}\varphi(q)=2(q-1)$ Hopf-Galois structures of
type $M$. This result was obtained in \cite{pq}, where we also
considered Hopf-Galois structures on a Galois extension with Galois
group $M$. When $q=2$, the result follows from \S\ref{dihedral}.

\begin{table} 
\centerline{ 
\begin{tabular}{|c|c|c|c|} \hline
 $d$ & $g$ & $z$ & Term in (\ref{HGS-formula}) \\ \hline
 $1$ & $pq$ & $1$ & $4$ \\
 $1$ & $p$ & $q$ & $-2$ \\
 $1$ & $q$ & $p$ & $-2$ \\
 $1$ & $1$ & $pq$ & $1$ \\
 $q$ & $p$ & $1$ & $2(q-1)$ \\ \hline
\end{tabular}
}  
\vskip5mm

\caption{Nonzero terms in (\ref{HGS-formula}) for $n=pq$.} 
\label{pq-table} 
\end{table}

Let us also verify that Proposition \ref{isom-g} correctly counts the
isomorphism types corresponding to each factorisation $n=dgz$, and
that Theorem \ref{main} correctly counts the total number of
Hopf-Galois structures, in this case. 

In the sum (\ref{isom-g-count}) of Proposition \ref{isom-g},
$\prod_{p \mid d} (p^{v(p,f)}-1)$ vanishes unless $d=1$ or $d=q$,
$f=p$ (so that also $g=p$). When $d=1$, (\ref{isom-g-count}) reduces
to
$$ \sum_{f \mid g} \mu\left(\frac{g}{f}\right) = 
   \begin{cases} 1 & \mbox{if } g=1; \\
                 0 & \mbox{otherwise.} \end{cases} $$ 
Thus when $d=1$, to get a group $G(d,e,k)$ we must take $g=1$ and $z=pq$.
We then have $G(d,e,k)=C_n$. When $d \neq 1$, all terms in
(\ref{isom-g-count}) vanish unless $d=q$, $g=p$, when the term for
$f=p$ gives $\varphi(q)^{-1} (q^1-1)=1$. Thus (\ref{isom-g-count})
tells us that there is just one isomorphism type of nonabelian group
of order $n$. Hence Proposition \ref{isom-g} does indeed give the
correct number of isomorphism types for each factorisation. By similar
reasoning (which we leave to the reader), H\"older's formula
(\ref{holder-form}) correctly predicts two isomorphism classes of
groups of order $n$.

We now turn to Theorem \ref{main}. The product over $p \mid d$
vanishes unless $d=1$ or $d=q$, $g=p$. The nonzero contributions to
(\ref{HGS-formula}) for the various factorisations $n=gzd$ are shown
in Table \ref{pq-table}. Summing the final column of Table
\ref{pq-table} gives the correct count of $2(q-1)+1$ Hopf-Galois
structures on a cyclic extension of degree $n=pq$. Table
\ref{pq-table} also illustrates an important feature of the formula
(\ref{HGS-formula}): factorisations $n=dgz$ for which there are no
corresponding groups $G$ can nevertheless contribute nonzero terms to
(\ref{HGS-formula}).

\subsection{Three Primes} \label{3-primes}

Let $n=p_1 p_2 p_3$ where $p_1 < p_2 <p_3$ are primes. Both the number
of isomorphism classes of groups of order $n$, and the number of
Hopf-Galois structures on a cyclic extension of degree $n$, depend on
congruence conditions relating the three primes. There are two
combinations of these conditions for which the Hopf-Galois structures
on all Galois extensions of degree $p_1 p_2 p_3$ (not just cyclic
extensions) have been enumerated. 

The first of these is when $p_1=2$ and $p_3=2p_2+1$ (so $p_2$ is a
Sophie Germain prime and $p_3$ is a safeprime). Kohl \cite[Theorem
  5.1]{kohl-mp} treated this case as an application of his method for
studying Hopf-Galois structures on Galois extensions of degree $mp$
(with $p$ prime and $m<p$). Those extensions with Galois group
$\Hol(C_{p_3})=C_{p_3} \rtimes C_{p_3-1}$ had previously been
considered in \cite{Ch03}.

The second situation where all Hopf-Galois structures have been
determined is when $p_1>2$ and $p_2 \equiv p_3 \equiv 1 \pmod{p_1}$
but $p_3 \not \equiv 1 \pmod{p_2}$. This case is treated in \cite[Theorem
  2.4]{kohl-ANT}. The same techniques could be applied to other
combinations of congruence conditions, but separate calculations would
be required for each case.

In the following, we will apply Theorem \ref{HGS-count} to count the
Hopf-Galois structures only on a {\em cyclic} extension of degree
$n=p_1 p_2 p_3$, but under all possible combinations of congruence
conditions. In particular, this will recover those parts of Kohl's
results in \cite{kohl-mp, kohl-ANT} which relate to cyclic extensions.

In Table \ref{3-prime-table} we show the factorisations $n=dgz$ for
which groups exist, the number of isomorphism types of these groups,
and the number of Hopf-Galois structures of each isomorphism type.

The first column of Table \ref{3-prime-table} numbers the
factorisations for ease of reference, and the factorisation is shown
in the next $3$ columns. The $5$th column shows the congruence
conditions which must be satisfied for groups to exist. The $6$th
column shows the number of isomorphism types of group corresponding to
the given factorisation, as given by Proposition \ref{isom-g}. These
can also be found directly, as explained below. The final column
shows the number of Hopf-Galois structures for each isomorphism
type. This is given by the formula $2^{\omega(g)} \varphi(d)$ of Theorem
\ref{HGS-count}.

We now explain how to find the values in the $6$th column of Table
\ref{3-prime-table} directly. (This illustrates in simple cases the
proof of Proposition \ref{isom-g}.) Consider for example case $2$,
where $d=p_1$, $g=p_2$, $z=p_3$, so $e=p_2 p_3$. The distinct
isomorphism types of groups $G(d,e,k)$ with these parameters
correspond to subgroups $\langle k \rangle \subseteq U(p_2 p_3)$ of
order $p_1$ for which $z=\gcd(k-1,p_2 p_3)=p_3$. Since $k \equiv 1
\pmod{p_3}$, we can identify $\langle k \rangle$ with a subgroup of
order $p_1$ in $U(p_2)$. Such a subgroup exists since $p_2 \equiv 1
\pmod{p_1}$, and it is unique since $U(p_2)$ is cyclic. Thus there is
just one group $G(d,e,k)$ in case 2. In case 4, however, where
$d=p_1$, $g=p_2 p_3$ and $z=1$, the isomorphism types of groups
$G(d,e,k)$ correspond to subgroups $\langle k \rangle \subseteq U(p_2
p_3)$ of order $p_1$ with $\gcd(k-1,p_2 p_3)=1$. Now $U(p_2 p_3) \cong
U(p_2) \times U(p_3)$ contains $p_1+1$ subgroups of order $p_1$.  For
one of these, $\gcd(k-1,p_2 p_3)=p_3$. This gives the group $G$ just
found in case 2. Another of the subgroups has $\gcd(k-1,p_2 p_3)=p_2$,
and this is counted in case 3. The remaining $p_1-1$ subgroups of
$U(p_2 p_3)$ give groups $G$ with $g=p_2p_3$ and $z=1$. Thus the
number of groups recorded in case $4$ is $p_1-1$.

\begin{table} 
\centerline{ 
\begin{tabular}{|c|ccc|c|c|c|} \hline
 Case & $d$ & $g$ & $z$ & Condition & \# groups & \# HGS per group \\ \hline
 $1$ & $1$ & $1$ & $p_1 p_2 p_3$ & & $1$ & $1$ \\
 $2$ & $p_1$ & $p_2$ & $p_3$ & $p_2 \equiv 1 \pmod{p_1}$ & $1$ &
 $2(p_1-1)$ \\
 $3$ & $p_1$ & $p_3$ & $p_2$ & $p_3 \equiv 1 \pmod{p_1}$ & $1$ &
 $2(p_1-1)$ \\
 $4$ & $p_1$ & $p_2 p_3$ & $1$ & $p_2 \equiv p_3 \equiv 1 \pmod{p_1}$ &
 $p_1-1$ & $4(p_1-1)$ \\ 
 $5$ & $p_2$ & $p_3$ & $p_1$ & $p_3 \equiv 1 \pmod{p_2}$ & $1$ &
 $2(p_2-1)$ \\ 
 $6$ & $p_1 p_2$ & $p_3$ & $1$ & $p_3  \equiv 1 \pmod{p_1 p_2}$ &
 $1$ & $2(p_1-1)(p_2-1)$ \\  \hline
\end{tabular}
}  
\vskip5mm

\caption{Numbers of isomorphism types and Hopf-Galois structures for
  $n=p_1 p_2 p_3$.}  \label{3-prime-table} 
\end{table}   

We now find the total number of Hopf-Galois structures on a cyclic
extension of degree $n$, treating each combination of relevant
congruence conditions on $p_1$, $p_2$, $p_3$ separately. The results
are shown in Table \ref{3-prime-total}. For each combination of
congruence conditions, we pick out the cases from Table
\ref{3-prime-table} where any groups $G(d,e,k)$ exist. To obtain the
total number of isomorphism types of groups of order $n$, we add the
numbers of groups from the corresponding rows in Table
\ref{3-prime-table}, giving the entries in the $5$th column of Table
\ref{3-prime-total}. These agree with the values given by Kohl
\cite[p.\ 46]{kohl-ANT}. To obtain the total number of Hopf-Galois
structures, we multiply the entries in the final two columns of Table
\ref{3-prime-table} and add these values for the appropriate
rows. After simplification, this gives the entries in the final
column of Table \ref{3-prime-total}. 

\begin{table} 
\centerline{ 
\begin{tabular}{|c|c|c|c|c|c|} \hline
 $p_2 \mid (p_3-1)$ & $p_1 \mid (p_3-1)$ &  $p_1 \mid (p_2 -1)$ &
  Cases & \# groups & Total \# HGS \\ \hline  
   no & no & no & $1$ & $1$ & $1$ \\
   no & no & yes & $1$, $2$ & $2$ & $2p_1-1$ \\
   no & yes & no & $1$, $3$ & $2$ & $2p_1-1$ \\
   no & yes & yes & $1$, $2$, $3$, $4$ & $p_1+2$ & $(2p_1-1)^2$ \\
   yes & no & no & $1$, $5$ & $2$ & $2p_2-1$ \\
   yes & no & yes & $1$, $2$, $5$ & $3$ & $2p_1 +2p_2-3$ \\
   yes & yes & no & $1$, $3$, $5$, $6$ & $4$ & $2p_1 p_2-1$ \\
   yes & yes & yes & $1$, $2$, $3$, $4$, $5$, $6$ & $p_1 + 4$ & 
          $4p_1^2 +2p_1 p_2 -6p_1 +1$ \\ \hline
\end{tabular}
}  
\vskip5mm

\caption{Total numbers of Hopf-Galois structures for $n=p_1 p_2 p_3$.}
\label{3-prime-total} 
\end{table}

We now specialise to the two situations considered in \cite[Theorem
  5.1]{kohl-mp} and \cite[Theorem 2.4]{kohl-ANT} in order to confirm
that we recover those parts of Kohl's results pertaining to cyclic
extensions.

First let $p_1=2$ and let $p_3=2p_2+1$ be a safeprime. Thus we have
$p_j \equiv 1 \pmod{p_i}$ whenever $1 \leq i \leq j \leq 3$,
corresponding to the final row (``yes--yes--yes'') in our Table
\ref{3-prime-total}. The first row of the table in \cite[Theorem
  5.1]{kohl-mp} shows that there are $6$ isomorphism types of groups
of order $n=p_1 p_2 p_3$, which Kohl denotes by $C_{mp}$, $C_p \times
D_q$, $F \times C_2$, $C_q \times D_p$, $D_{pq}$, $\Hol(C_p)$, where
$q=p_2$ and $p=p_3$. These contribute $1$, $2$, $2(p_2-1)$, $2$, $4$,
$2(p_2-1)$ Hopf-Galois structures respectively. The total number of
Hopf-Galois structures is therefore $4p_2+5$.  These groups are
respectively those of cases 1, 2, 5, 3, 4, 6 in our Table
\ref{3-prime-table}. Putting $p_1=2$ in Table \ref{3-prime-table}, we
again get $4p_2+5$ for the total number of Hopf-Galois structures, and
the number of Hopf-Galois structures of each type shown in Table
\ref{3-prime-table} agrees with Kohl's values. Thus our results
recover the part of Kohl's result \cite[Theorem 5.1]{kohl-mp} relating
to cyclic extensions of degree $2pq=2p_2(2p_2+1)$.

Now let $p_1>2$ and $p_2 \equiv p_3 \equiv 1 \pmod{p_1}$ but $p_3 \not
\equiv 1 \pmod{p_2}$, corresponding to the 4th row (``no--yes--yes'')
of our Table \ref{3-prime-total}. The first row of the table in
\cite[Theorem 2.4]{kohl-ANT} shows that there are $p_1+2$ groups
$G$. (Note that the final column, headed $C_{p_3p_2} \rtimes_i
C_{p_1}$, corresponds to $p_1-1$ distinct isomorphism types, given by
$1 \leq i \leq p_1-1$.) Of these groups, one contributes one
Hopf-Galois structure, two contribute $2(p_1-1)$, and the rest
$4(p_1-1)$. Thus there are in total of $(2p_1-1)^2$ Hopf-Galois
structures. This agrees with our count in Table \ref{3-prime-total}
and the relevant cases, 1--4, in Table \ref{3-prime-table}. (The
restriction $p_1>2$ turns out to be irrelevant when the Galois group
is cyclic.)

\subsection{Four primes}

As a final example, we consider the case when $n=p_1 p_2 p_3 p_4$ is
the product of $4$ distinct primes, under the assumption that 
\begin{equation} \label{all-cong}
  p_j \equiv 1 \pmod{p_i} \mbox{ whenever } i <j.
\end{equation}
(Thus we have $p_1<p_2<p_3<p_4$.)

We record in Table \ref{4-prime} the number of isomorphism classes of
groups $G(d,e,k)$, and the number of Hopf-Galois structures of each
type, corresponding to each relevant factorisation $n=dgz$. 

\begin{table} 
\centerline{ 
\begin{tabular}{|ccc|c|c|c|} \hline
  $d$ & $g$ & $z$ &  \# groups & \# HGS per group \\ \hline
  $1$ & $1$ & $p_1 p_2 p_3 p_4$  & $1$ & $1$ \\ \hline
  $p_1$ & $p_2$ & $p_3 p_4$ & $1$ & $2(p_1-1)$ \\
  $p_1$ & $p_3$ & $p_2 p_4$ & $1$ & $2(p_1-1)$ \\
  $p_1$ & $p_4$ & $p_2 p_3$ & $1$ & $2(p_1-1)$ \\
  $p_1$ & $p_2 p_3$ & $p_4$ & $p_1-1$ & $4(p_1-1)$ \\
  $p_1$ & $p_2 p_4$ & $p_3$ & $p_1-1$ & $4(p_1-1)$ \\
  $p_1$ & $p_3 p_4$ & $p_2$ & $p_1-1$ & $4(p_1-1)$ \\
  $p_1$ & $p_2 p_3 p_4$ & $1$ & $(p_1-1)^2$ & $8(p_1-1)$ \\ \hline
  $p_2$ & $p_3$ & $p_1 p_4$ & $1$ & $2(p_2-1)$ \\
  $p_2$ & $p_4$ & $p_1 p_3$ & $1$ & $2(p_2-1)$ \\
  $p_2$ & $p_3 p_4$ & $p_1$ & $p_2-1$ & $4(p_2-1)$ \\ \hline
  $p_3$ & $p_4$ & $p_1 p_2$ & $1$ & $2(p_3-1)$ \\ \hline
  $p_1 p_2$ & $p_3$ & $p_4$ & $1$ & $2(p_1-1)(p_2-1)$ \\
  $p_1 p_2$ & $p_4$ & $p_3$ & $1$ & $2(p_1-1)(p_2-1)$ \\
  $p_1 p_2$ & $p_3 p_4$ & $1$ & $(p_1+1)(p_2+1)-2$ & $4(p_1-1)(p_2-1)$
  \\ \hline
  $p_1 p_3$ & $p_4$ & $p_2$ & $1$ & $2(p_1-1)(p_3-1)$ \\ \hline
  $p_2 p_3$ & $p_4$ & $p_1$ & $1$ & $2(p_2-1)(p_3-1)$ \\ \hline
  $p_1 p_2 p_3$ & $p_4$ & $1$ & $1$ & $2(p_1-1)(p_2-1)(p_3-1)$ \\ \hline
\end{tabular}
}  
\vskip5mm

\caption{Numbers of isomorphism types and Hopf-Galois structures for
  $n=p_1 p_2 p_3 p_4$.} \label{4-prime} 
\end{table}
It follows from this table that, under the assumption
(\ref{all-cong}), there are $p_1^2+p_1p_2+2p_1+2p_2+8$ isomorphism types of
groups of order $n=p_1 p_2 p_3 p_4$, and the total number of
Hopf-Galois structures is
$$ 4p_1^2 p_2^2 +8p_1^3 + 2p_1 p_2 p_3 - 16 p_1^2
 -6 p_1 p_2  +10p_1 -1. $$
For example, if $n=2\cdot 3 \cdot 7 \cdot 43=1806$, or more
generally, if $n=42p_4$ for any prime $p_4 \equiv 1 \pmod{42}$, then
a cyclic extension of degree $n$ admits precisely $211$ Hopf-Galois
structures of $28$ different types. 

When (\ref{all-cong}) does not hold, we can enumerate the Hopf-Galois
structures by picking out the appropriate rows in Table \ref{4-prime},
just as we did in \S\ref{3-primes}. 

\bibliography{CSQ-bib}

\end{document}